\definecolor{darkgreen}{rgb}{0,0.5,0}
\definecolor{darkblue}{rgb}{0,0,0.7}
\newtheorem{proposition}{Proposition}
\newtheorem{theorem}[proposition]{Theorem}
\newtheorem{lemma}[proposition]{Lemma}
\theoremstyle{remark}
\newtheorem{remark}[proposition]{Remark}
\theoremstyle{definition}
\numberwithin{equation}{section}
\numberwithin{proposition}{section}
\numberwithin{figure}{section}
\numberwithin{table}{section}
\newcommand{\Z}{\mathbb{Z}}
\newcommand{\N}{\mathbb{N}}
\newcommand{\R}{\mathbb{R}}
\newcommand{\E}{\mathbb{E}}
\newcommand{\eps}{\varepsilon}
\renewcommand{\leq}{\leqslant}
\renewcommand{\geq}{\geqslant}
\renewcommand{\subset}{\subseteq}
\renewcommand{\bar}{\overline}
\renewcommand{\tilde}{\widetilde}
\newcommand{\Ll}{\left}
\newcommand{\Rr}{\right}
\renewcommand{\d}{\mathrm{d}}
\newcommand{\mcl}{\mathcal}
\newcommand{\mfk}{\mathfrak}
\newcommand{\la}{\left\langle}
\newcommand{\ra}{\right\rangle}
\renewcommand{\H}{\mathsf{H}}
\renewcommand{\S}{S}
\newcommand{\D}{{D}}
\newcommand{\cL}{{\mathcal{L}}}
\newcommand{\mmse}{\mathrm{mmse}}
\newcommand{\MMSE}{\mathrm{MMSE}}
\newcommand{\sfp}{{\mathsf{p}}}
\newcommand{\bx}{\mathbf{x}}
\newcommand{\bn}{{\mathbf{n}}}
\newcommand{\bd}{{\mathbf{d}}}
\newcommand{\cD}{\mathcal{D}}
\newcommand{\overlap}{Q}
\begin{document}

\author{Hong-Bin Chen\,\orcidlink{0000-0001-6412-0800}}
\address[Hong-Bin Chen]{Institut des Hautes Études Scientifiques, Bures-sur-Yvette, France}
\email{\href{mailto:hongbin.chen@nyu.edu}{hongbin.chen@nyu.edu}}

\author{Victor Issa\,\orcidlink{0009-0009-1304-046X}}
\address[Victor Issa]{Department of Mathematics, ENS de Lyon, Lyon, France}
\email{\href{mailto:victor.issa@ens-lyon.fr}{victor.issa@ens-lyon.fr}}

\keywords{}
\subjclass[2010]{}
\date{}

\title[Differentiability and overlap concentration]{Differentiability and overlap concentration in optimal Bayesian inference}

\begin{abstract}
In this short note, we consider models of optimal Bayesian inference of finite-rank tensor products. We add to the model a linear channel parametrized by $h$. We show that at every interior differentiable point $h$ of the free energy (associated with the model), the overlap concentrates at the gradient of the free energy and the minimum mean-square error converges to a related limit. In other words, the model is replica-symmetric at every differentiable point. At any signal-to-noise ratio, such points $h$ form a full-measure set (hence $h=0$ belongs to the closure of these points). For a sufficiently low signal-to-noise ratio, we show that every interior point is a differentiable point.

    \bigskip

    \noindent \textsc{Keywords and phrases:}  statistical inference, Hamilton--Jacobi equation, replica symmetry

    \medskip

    \noindent \textsc{MSC 2020:} 82B44, 82D30, 35D40
\end{abstract}

\maketitle

\tableofcontents

%
%
%
%
%
%

\section{Introduction}

Recently, there has been tremendous progress in understanding the information-theoretical aspect of statistical inference models. These works often utilize the toolbox from the mean-field spin glass theory. In particular, the Bayesian inference of finite-rank tensor products in the optimal case (where the posterior is known) can be seen as a simple variant of a mean-field spin glass model. One important feature of the optimal model is that the system is always in the replica symmetry regime. This means that the spin-glass order parameter, the overlap, always concentrates (under a small perturbation).

In this note, we clarify the connection between the differentiability of the limit free energy associated with the inference model and the concentration of the overlap as well as the convergence of minimal mean-square errors. The theme is closely related to the so-called generic model in the spin glass (see~\cite[Section~15.4]{Tbook2} and~\cite[Section~3.7]{pan}). 
In the context of optimal Bayesian inference, we show that at any differentiable point (with respect to the parameter $h$ for an additional linear channel (or external field)) of the limit free energy, the overlap concentrates. 
In other words, the model is
replica-symmetric at such points.
This is different from the result by Barbier in~\cite{barbier2021overlap} that shows the concentration of the overlap under an average of small perturbation (which corresponds to the same phenomenon in general non-generic spin glass models).

The parameter $h$ also corresponds to the spatial parameter in the Hamilton--Jacobi equation representation of the limit free energy. 
Since the limit free energy is Lipschitz, Lebesgue-almost-every point is differentiable.
Moreover, under our assumption, the initial condition (namely, the free energy associated with the linear channel) is smooth. Hence, we can solve the equation using characteristic lines for a short time (small signal-to-noise ratio) and the solution is smooth. Therefore, in this regime, every interior point $h$ is a differentiable point.

This short-time regularity result is less obvious in our case because the equation is not posed on the entire Euclidean space but on a closed convex cone. Hence, it is important that the nonlinearity ``points in the right direction'' so that the characteristics do not leave the domain.

\subsection{Setting and the main result}

Throughout, we write $\R_+=[0,\infty)$ and $\R_{++}=(0,\infty)$.
For any matrices $a,b$ or vectors of the same dimension, we write $a\cdot b = \sum_{ij}a_{ij}b_{ij}$ as the entry-wise inner product and write $|a|=\sqrt{a\cdot a}$. Throughout, we fix $\D\in\N$ and let $\S^\D$ be the linear space of $\D\times\D$ real symmetric matrices. We view $\S^\D$ as a subspace of $\R^{\D\times\D}$ and endow $\S^\D$ with the aforementioned entry-wise inner product. Let $\S^\D_+$ (resp.\ $\S^\D_{++}$) be the subset of $\S^\D$ consisting of positive semi-definite (resp.\ definite) matrices. Notice that $\S^\D_+$ (resp.\ $\S^\D_{++}$) is a closed (resp.\ open) convex cone in $\S^\D$.
We can identify $\S^\D$ isometrically with a Euclidean space and put the Lebesgue measure on it. Throughout, a full-measure subset of $\S^\D_+$ means that its complement in $\S^\D_+$ has Lebesgue measure zero.

We consider the statistical inference problem of general tensor products as in~\cite[Section~1.1]{chen2022statistical}.
Fix $\D$ (which is $K$ in~\cite{chen2022statistical}) to be the rank of the signal. For each $N\in\N$, we denote the $\R^{N\times \D}$-valued signal by $X$. We assume that the distribution of $X$ is known and we denote it by $P_N$.
Fix any $\sfp \in\N$ and we view the tensor product $X^{\otimes p}$ as an $N^\sfp\times \D^\sfp$-matrix in terms of the Kronecker product.
Fix any deterministic $A \in \R^{\D^\sfp\times L}$ for some $L\in \N$. We view $A$ as the matrix describing the interaction of entries in $X^{\otimes\sfp}$. 
Let $t\geq 0$ and we interpret $2t$ as the signal-to-noise ratio.
The $\R^{N^\sfp\times L}$-valued noisy observation $Y$ is given by 
\begin{align*} 
    Y = \sqrt{\frac{2t}{N^{\sfp-1}}} X^{\otimes \sfp}A+W
\end{align*}
where $W$ is an $N^{\sfp}\times L$ matrix with i.i.d.\ standard Gaussian entries.
In addition to $Y$, we also consider an independent $\R^{N\times \D}$-valued linear channel
\begin{align}\label{e.barY=Xsqrt(2h)+Z}
    \bar Y = X\sqrt{2h}+Z
\end{align}
for an $N\times \D$ matrix $Z$ consisting of i.i.d.\ standard Gaussian entries. From the perspective of statistical mechanics, $\bar Y$ gives rise to an external field in the system.

For $N\in\N$, $t\in\R_+$, and $h\in \S^\D_+$,
we consider the random Hamiltonian
\begin{align}\label{e.H_N(t,h,x)}
\begin{split}
    H_N(t,h,\bx) = &\sqrt{\frac{2t}{N^{\sfp-1}}}\Ll(\bx^{\otimes p}A\Rr)\cdot Y - \frac{t}{N^{\sfp-1}}\Ll|\bx^{\otimes \sfp}A\Rr|^2
    \\
    &+\sqrt{2h}\cdot \Ll(\bx^\intercal \bar Y\Rr)- h\cdot \Ll(\bx^\intercal \bx\Rr).
\end{split}
\end{align}
Here, $\sqrt{h}$ is the matrix square root of $h$, which is well-defined for $h\in\S^\D_+$.
The randomness in $H_N(t,h,\bx)$ comes from $X$, $W$, and $Z$.
We define the associated Gibbs measure by
\begin{align}\label{e.<>=}
    \la\cdot\ra_{N,t,h} \propto \exp\Ll(H_N(t,h,\bx)\Rr) \d P_N(\bx).
\end{align}
By the Bayes rule, the Gibbs measure satisfies
\begin{align}\label{e.<g(x)>=}
    \la g \Ll(\bx\Rr) \ra_{N,t,h} = \E \Ll[g\Ll(X\Rr)\,\big|\,Y,\bar Y\Rr]
\end{align}
for any bounded measurable function $g$.
Slightly abusing the notation, we denote still by $\la\cdot\ra_{N,t,h}$ its tensorized version, which allows us, for instance, to consider independent samples $\bx$ and $\bx'$ from $\la\cdot\ra_{N,t,h}$.

let $F_N(t,h)$ be the enriched free energy (see~\cite[(1.3)]{chen2022statistical}) defined by
\begin{align}\label{e.F_N=}
    F_N(t,h) = \frac{1}{N}\log\int_{\R^{N\times D}}\exp\Ll(H_N(t,h,\bx)\Rr)\d P_N(\bx).
\end{align}
We write
\begin{align*}
    \bar F_N(t,h)=\E F_N(t,h)
\end{align*}
where $\E$ averages over the randomness of $X$, $W$, and $Z$. As a consequence, $\bar  F_N(t,h)$ is nonrandom.
We view $\bar F_N$ as a real-valued function on $\R_+\times \S^\D_+$.

We often impose some of the following assumptions:
\begin{enumerate}[start=1,label={\rm (H\arabic*)}]
    \item \label{i.assume_1_support_X}
    For each $N\in\N$, every entry in $X$ is in $[-1,+1]$ a.s.\ under $P_N$.
    \item \label{i.assume_2_F_N(0,0)_cvg}
    As $N\to\infty$, $\Ll(\bar F_N(0,\cdot)\Rr)_{N\in\N}$ converges pointwise everywhere to some continuously differentiable function $\psi:\S^\D_+\to\R$.
    \item \label{i.assume_3_concent}
    For every compact subset $K\subset \R_+\times \S^\D_+$, we have $\lim_{N\to\infty} \E \Ll|F_N-\bar F_N\Rr|^2_{L^\infty(K)}=0$.
\end{enumerate}
A stronger assumption is the following:
\begin{enumerate}[label={\rm (HS)}]
    \item \label{i.assume_S}
    For each $N\in\N$, row vectors in $X$ are i.i.d.\ with a fixed distribution $P_1$. Let $X_1$ be the first row-vector of $X$. Under $P_1$, entries of $X_1$ are in $[-1,+1]$.
\end{enumerate}
Under~\ref{i.assume_S}, \ref{i.assume_1_support_X} clearly holds; \ref{i.assume_2_F_N(0,0)_cvg} holds with $\psi = \bar F_1(0,\cdot)$ because in this case $\bar F_N(0,\cdot)= \bar F_1(0,\cdot)$ for every $N\in\N$; and we can deduce~\ref{i.assume_3_concent} using standard techniques \cite[Lemma~C.1]{chen2022hamilton} (this lemma assumes that $X$ has i.i.d.\ entries but the same argument holds for~\ref{i.assume_S}). 

Define $\H:\S^\D\to\R$ (as in~\cite[(1.4)]{chen2022statistical}) by
\begin{align}\label{e.H(q)=(AA)...}
    \H(q) = \Ll(AA^\intercal\Rr)\cdot q ^{\otimes \sfp},\quad\forall q\in\S^\D.
\end{align}

\begin{theorem}[\cite{chen2022statistical} limit of free energy]\label{t.cvgF_N_to_f}
Assume~\ref{i.assume_1_support_X}, \ref{i.assume_2_F_N(0,0)_cvg}, and~\ref{i.assume_3_concent}.
The function $\bar F_N$ converges pointwise everywhere on $\R_+\times\S^\D_+$ to the unique Lipschitz viscosity solution of
\begin{align}\label{e.hj_stats}
    \partial_t f- \mathsf{H}(\nabla_h f)=0,\quad\text{on $\R_+\times \S^\D_+$}
\end{align}
with initial condition $f(0,\cdot) = \psi$. 

Moreover, $f$ always admits the representation by the Hopf formula:
\begin{align}\label{e.hopf_inference}
    f(t,h)= \sup_{h'\in\S^\D_+}\Ll\{h'\cdot h -\psi^*(h')+t\H\Ll(h'\Rr)\Rr\},\quad\forall (t,h)\in\R_+\times\S^\D_+;
\end{align}
if in addition $\H$ is convex on $\S^\D_+$, then $f$ admits the representation by the Hopf--Lax formula:
\begin{align}\label{e.hopf-lax_inference}
    f(t,h)= \sup_{h'\in\S^\D_+}\Ll\{\psi(h+ h')-t \H^*(h'/t)\Rr\},\quad\forall (t,h)\in\R_+\times\S^\D_+.
\end{align}
\end{theorem}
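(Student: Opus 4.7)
The plan is to establish uniform Lipschitz equicontinuity of $(\bar F_N)_N$ on $\R_+\times\S^\D_+$, identify every subsequential limit with the unique Lipschitz viscosity solution of~\eqref{e.hj_stats} via the Hopf envelope~\eqref{e.hopf_inference}, and then derive~\eqref{e.hopf-lax_inference} by Legendre--Fenchel duality whenever $\H$ is convex.

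The first step is to differentiate $\bar F_N$ in $t$ and $h$. Gaussian integration by parts in $W$ and $Z$, combined with the Nishimori identity that follows from~\eqref{e.<g(x)>=}, yields
\begin{align*}
    \nabla_h \bar F_N(t,h) = \E\la N^{-1}\bx^\intercal \bx\ra_{N,t,h}, \qquad
    \partial_t \bar F_N(t,h) = \E\la \H(N^{-1}\bx^\intercal \bx')\ra_{N,t,h},
\end{align*}
where $\bx,\bx'$ are independent replicas drawn from $\la\cdot\ra_{N,t,h}$. Under~\ref{i.assume_1_support_X}, the overlap $N^{-1}\bx^\intercal \bx$ takes values in a fixed compact subset of $\S^\D_+$, so both derivatives are bounded uniformly in $N$ and the family $(\bar F_N)_N$ is uniformly Lipschitz. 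Arzel\`a--Ascoli together with~\ref{i.assume_2_F_N(0,0)_cvg} then gives, along any subsequence, a further subsequence converging locally uniformly to some Lipschitz $f:\R_+\times\S^\D_+\to\R$ with $f(0,\cdot)=\psi$; assumption~\ref{i.assume_3_concent} upgrades this to convergence of $F_N$ itself and will be used to identify the limit.

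The core of the argument is to show that any such accumulation point $f$ equals the right-hand side of~\eqref{e.hopf_inference}. I would prove the two matching bounds separately. For the lower bound $f(t,h)\geq h\cdot h' - \psi^*(h') + t\H(h')$ with $h'\in\S^\D_+$, I would use a Guerra--Toninelli interpolation between $H_N(t,h,\bx)$ and a decoupled Hamiltonian with an effective linear external field built from $h'$, using the Nishimori identity to control the sign of the $s$-derivative and optimizing over $h'$ after sending $N\to\infty$. For the matching upper bound, I would rely on the convexity of the initial data $\psi$, which is inherited from the convexity in $h$ of each $\bar F_N(0,\cdot)$ seen as the log-partition function of a Gaussian exponential family in its natural parameter; once $\psi=\psi^{**}$, the Hopf envelope is the unique Lipschitz viscosity solution of~\eqref{e.hj_stats} and a second-moment / cavity argument matches it from above. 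Subsequential compactness then promotes this to convergence of the full sequence. When $\H$ is convex on $\S^\D_+$, Fenchel duality between $\H$ and $\H^*$ converts~\eqref{e.hopf_inference} into~\eqref{e.hopf-lax_inference}.

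The main obstacle is that $\S^\D_+$ is a closed convex cone rather than an open subset of Euclidean space, so neither PDE well-posedness for~\eqref{e.hj_stats} nor the Hopf representation is off-the-shelf. The decisive geometric input is that the gradient of $\H$ preserves the cone: since $\H(q)=(AA^\intercal)\cdot q^{\otimes \sfp}$ with $AA^\intercal\succeq 0$, the matrix $\nabla\H(h')$ is positive semi-definite whenever $h'\in\S^\D_+$, as one checks by expanding $AA^\intercal$ and $h'$ in spectral form. This inward-flow property ensures that no boundary condition on $\dr\S^\D_+$ is needed for uniqueness, that the characteristics of~\eqref{e.hj_stats} starting in the interior stay in the cone, and that the effective external field appearing in the Guerra interpolation remains in $\S^\D_+$; all three ingredients are essential for the argument to close.
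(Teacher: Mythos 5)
The paper does not reprove this theorem at all: its ``proof'' is a short list of citations---the convergence and Hopf formula are imported verbatim from~\cite{chen2022statistical} (Theorems~5.1 and~1.1), the Hopf--Lax formula from~\cite[Proposition~6.2]{chen2022hamiltonCones}, with three hypotheses checked: the Fenchel--Moreau property of $\S^\D_+$ (from~\cite[Proposition~B.1]{HBJ}), monotonicity of $\psi$, and monotonicity of $\H$ on $\S^\D_+$. Your proposal instead attempts a self-contained reconstruction, which is a genuinely different route; it is fair to evaluate it as such, but several steps do not close as stated.

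The most serious gap is the upper bound matching $f$ to the Hopf envelope. You write that ``once $\psi=\psi^{**}$, the Hopf envelope is the unique Lipschitz viscosity solution and a second-moment / cavity argument matches it from above,'' but this is precisely the content that needs proving: that the Hopf formula solves~\eqref{e.hj_stats} in the viscosity sense on the cone is a nontrivial well-posedness statement (it is not inherited from the Euclidean theory, and~\cite{chen2022hamiltonCones} exists precisely to establish it), and there is no standard ``second-moment / cavity'' argument that directly produces a matching upper bound against a Hopf envelope. The actual route in~\cite{chen2022statistical} is to show that $\bar F_N$ is an approximate viscosity solution and then invoke a comparison principle, not a cavity computation. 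A second error is your justification that $\psi$ is convex ``as the log-partition function of a Gaussian exponential family in its natural parameter.'' The Hamiltonian's dependence on $h$ goes through both $h$ (linearly) and $\sqrt{h}$ (in the noise term), so $h$ is \emph{not} a natural parameter of an exponential family and convexity of $\bar F_N(0,\cdot)$ is not automatic; it is a genuine lemma requiring the Nishimori identity (cf.~\cite[Lemma~2.3]{chen2022statistical}). You also need, but do not mention, the Fenchel--Moreau property of $\S^\D_+$: the passage from the Hopf to the Hopf--Lax formula is a biconjugation argument on a cone, and this fails on general convex sets---identifying the inward-flow property $\nabla\H(\S^\D_+)\subset\S^\D_+$ (which you do note, and which is correct) is not a substitute for it. Finally, a small computational slip: $\nabla_h\bar F_N(t,h)$ equals $\tfrac1N\E[\la\bx\ra^\intercal\la\bx\ra]$ (equivalently $\tfrac1N\E\la\bx^\intercal X\ra$ by Nishimori), not $\E\la N^{-1}\bx^\intercal\bx\ra$; this does not affect the Lipschitz bound but should be corrected.
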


This is the main result from~\cite{chen2022statistical} combined with other results. We give a detailed explanation in Section~\ref{s.maximizers}. In~\eqref{e.hopf_inference} and~\eqref{e.hopf-lax_inference}, $\psi^*$ and $\H^*$ are monotone convex conjugate defined in~\eqref{e.convex_conjugate}.

\begin{remark}[Almost everywhere differentiability]
\label{r.ae_diff}
Since $\bar F_N$ is Lipschitz with coefficient uniform in $N$ (evident from~\eqref{e.derF_N=} and~\ref{i.assume_1_support_X}), we have that $f$ is Lipschitz and thus $f$ is differentiable almost everywhere on $\R_+\times \S^\D_+$ by Rademacher's theorem. \qed
\end{remark}

Next, we introduce two versions of minimal mean-square errors to be considered here:
\begin{gather}
    \MMSE_N(t,h) =\frac{1}{N} \E \Ll[\Ll(X- \E \Ll[ X\,\big|\,Y,\bar Y\Rr]\Rr)^\intercal\Ll(X- \E \Ll[ X\,\big|\,Y,\bar Y\Rr]\Rr)\Rr],\label{e.MMSE=}
    \\
    \mmse_N(t,h)= \frac{1}{N^\sfp}\E \Ll|X^{\otimes\sfp} A - \E \Ll[X^{\otimes\sfp} A\,\big|\, Y,\bar Y\Rr]\Rr|^2.\label{e.mmse=}
\end{gather}
These are natural measures of performance. 
Here, $\MMSE_N(t,h)$ is the $\S^\D_+$-valued MMSE matrix introduced in~\cite{reeves2019geometry,reeves2019geometryIEEE,reeves2018mutual};
$\mmse_N(t,h)$ is natural in the tensor inference setting, which is similar to the one considered in~\cite[(18)]{reeves2020information}.
In the estimation of low rank symmetric matrices \cite{lelarge2019fundamental} (Theorem~1.1) and low-rank asymmetric matrices \cite{miolane2017fundamental} (Proposition~2), the limit of $\mmse_N(t,0)$ was identified. The convergence of matrix MMSE in the inference of second-order matrix tensor products was studied in~\cite[Section~II.B]{reeves2020information}.

Lastly, for each $N\in\N$, we consider the $\D\times \D$ overlap matrix defined by
\begin{align}\label{e.overlap=}
    \overlap = \frac{1}{N} X^\intercal \bx.
\end{align}
Our main results are summarized as follows. 

\begin{theorem}\label{t}
Assume~\ref{i.assume_1_support_X}, \ref{i.assume_2_F_N(0,0)_cvg}, and~\ref{i.assume_3_concent}. Let $t\in \R_{++}$. Then, at every differentiable point $h \in  \S^\D_{++}$ of $f(t,\cdot)$ (which forms a full-measure subset of $\S^\D_+$), we have that $f(\cdot,h)$ is also differentiable at $t$ and the following holds:
\begin{enumerate}
    \item\label{i.t_1} The gradient $\nabla_hf(t,h)$ is the unique maximizer of the Hopf formula~\eqref{e.hopf_inference} at $(t,h)$.
    \item\label{i.t_2} The overlap concentrates at the value $\nabla_hf(t,h)$, namely,
    \begin{align*}
        \lim_{N\to\infty}\E \la \Ll|\overlap - \nabla_h f(t,h)\Rr|\ra_{N,t,h} = 0.
    \end{align*}
    \item\label{i.t_3} Under the stronger assumption~\ref{i.assume_S}, we have
    \begin{align*}
        \lim_{N\to\infty} \MMSE_N(t,h)  &= \E \Ll[X_1^\intercal X_1\Rr] - \nabla_h f(t,h),
        \\
        \lim_{N\to\infty} \mmse_N(t,h)  & = \Ll(AA^\intercal\Rr)\cdot \Ll(\E \Ll[X_1^\intercal X_1\Rr]\Rr)^{\otimes \sfp} - \partial_t f(t,h).
    \end{align*}
\end{enumerate}
Moreover, if $\psi$ is twice differentiable with bounded derivatives (which is satisfied under~\ref{i.assume_S}), setting
\begin{align}\label{e.L=}
    L = \Ll\|\nabla\H(\nabla\psi)\Rr\|_\mathrm{Lip},
\end{align}
we have that $f$ is twice differentiable everywhere on $[0, L^{-1})\times \S^\D_+$.
\end{theorem}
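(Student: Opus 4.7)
My plan addresses the four assertions in sequence, leaning on two structural facts: joint convexity of $f$ from the Hopf formula~\eqref{e.hopf_inference}, and Nishimori/Bayesian identities that express $\nabla_h\bar F_N$ and $\partial_t\bar F_N$ as overlap moments. For claim~(\ref{i.t_1}), the Hopf formula writes $f(t,\cdot)$ as a supremum of affine functions of $h$, hence $f(t,\cdot)$ is convex. For any maximizer $h^*$ at $(t,h)$ and any $h_1\in\S^\D_+$, subtracting the defining equality at $h$ from the Hopf inequality at $h_1$ yields $f(t,h_1)-f(t,h)\geq h^*\cdot(h_1-h)$, so $h^*\in\partial_h f(t,h)$; at the interior differentiable point $h\in\S^\D_{++}$ the subdifferential is $\{\nabla_h f(t,h)\}$, forcing $h^*=\nabla_h f(t,h)$. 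The $t$-differentiability of $f$ at $(t,h)$ then follows from $t$-convexity of $f$ (also from~\eqref{e.hopf_inference}) combined with the envelope identity $\partial_t^\pm f(t,h)=\H(h^*)$ at any maximizer: uniqueness of $h^*$ forces $\partial_t^- f=\partial_t^+ f=\H(\nabla_h f(t,h))$.

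For claim~(\ref{i.t_2}), Gaussian integration by parts against $Z$ combined with the Bayesian identity $\E\la g(\bx)\ra=\E[g(X)]$ yields the derivative identity $\nabla_h\bar F_N(t,h)=\E\la\overlap\ra_{N,t,h}$ (symmetrized), and a parallel computation shows convexity of $\bar F_N(t,\cdot)$ on $\S^\D_+$. Pointwise convergence $\bar F_N\to f$ (Theorem~\ref{t.cvgF_N_to_f}) with convexity plus differentiability of the limit at $h$ gives the classical Rockafellar-type gradient convergence $\E\la\overlap\ra_{N,t,h}\to\nabla_h f(t,h)$. To upgrade mean convergence to $L^1$-concentration, I would bound
\[
\E\la|\overlap-\nabla_h f(t,h)|\ra\leq\E\la|\overlap-\la\overlap\ra|\ra+\E|\la\overlap\ra-\E\la\overlap\ra|+|\E\la\overlap\ra-\nabla_h f(t,h)|.
\]
The third term is handled above. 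The quenched term is controlled by combining~\ref{i.assume_3_concent} with convexity of $F_N$ in $h$, via the standard convex-analysis fact that uniform $L^2$-closeness of a convex random function to its expectation forces small variance of the gradient at interior differentiable points of the limit. The thermal term reduces to the quenched one via the Nishimori identity equating replica-overlap moments $\E\la\overlap\cdot\overlap'\ra$ with signal-overlap moments $\E\la\overlap\cdot(X^\intercal X/N)\ra$; with boundedness of $\overlap$ from~\ref{i.assume_1_support_X}, Cauchy--Schwarz upgrades $L^2$ to $L^1$ control. This is the step I expect to be the main technical obstacle.

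For claim~(\ref{i.t_3}), conditional orthogonality plus Nishimori give $N\MMSE_N(t,h)=\E[X^\intercal X]-N\E\la\overlap\ra_{N,t,h}$; under~\ref{i.assume_S}, $\frac{1}{N}\E[X^\intercal X]=\E[X_1^\intercal X_1]$ and claim~(\ref{i.t_2}) closes the first limit. Analogously, the I-MMSE-type identity $\partial_t\bar F_N(t,h)=\frac{1}{N^\sfp}(AA^\intercal)\cdot\E\la(X^{\otimes\sfp})^\intercal\bx^{\otimes\sfp}\ra$ and conditional orthogonality give $\mmse_N(t,h)=\frac{1}{N^\sfp}(AA^\intercal)\cdot\E[(X^{\otimes\sfp})^\intercal X^{\otimes\sfp}]-\partial_t\bar F_N(t,h)$; $t$-convexity of $\bar F_N$ together with claim~(\ref{i.t_1}) lets me pass $\partial_t\bar F_N(t,h)\to\partial_t f(t,h)=\H(\nabla_h f(t,h))$, while~\ref{i.assume_S} converts $\frac{1}{N^\sfp}\E[(X^{\otimes\sfp})^\intercal X^{\otimes\sfp}]$ to $(AA^\intercal)\cdot\E[X_1^\intercal X_1]^{\otimes\sfp}$.

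For the Moreover clause, assuming $\psi$ is twice differentiable with bounded derivatives, I solve~\eqref{e.hj_stats} by the method of characteristics. Since $\H$ is $h$-independent, momentum is conserved along characteristics: $p(s)\equiv\nabla\psi(h_0)$, $h(s)=h_0+s\nabla\H(\nabla\psi(h_0))$, and $f(s,h(s))=\psi(h_0)+s\H(\nabla\psi(h_0))$. By~\eqref{e.L=} and the inverse function theorem, the characteristic map $h_0\mapsto h(s)$ is a $C^1$ diffeomorphism for $s\in[0,L^{-1})$, so the candidate $f$ so constructed is $C^2$ on its image; uniqueness of viscosity solutions from Theorem~\ref{t.cvgF_N_to_f} identifies this candidate with $f$. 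The secondary obstacle, flagged in the introduction, is geometric: I must verify that characteristics from $\S^\D_+$ remain in and cover $\S^\D_+$, since~\eqref{e.hj_stats} is posed on a closed convex cone rather than on all of $\S^\D$. This reduces to checking $\nabla\H(\nabla\psi(h_0))\in\S^\D_+$ for $h_0\in\S^\D_+$: the gradient $\nabla\psi(h_0)$ is psd since it realizes an overlap expectation of a Bayesian free energy, and $\nabla\H$ at a psd argument is a tensor contraction against $AA^\intercal\in\S^{\D^\sfp}_+$, hence psd---this is the ``nonlinearity points in the right direction'' remark from the introduction.
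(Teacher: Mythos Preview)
Your arguments for claims~\eqref{i.t_1} and~\eqref{i.t_3} match the paper's (Propositions~\ref{p.maximizers} and~\ref{p.mmse}). For the Moreover clause your characteristic construction is also the paper's, but the reduction you claim is incomplete: the condition $\nabla\H(\nabla\psi(h_0))\in\S^\D_+$ only ensures forward characteristics remain in the cone; it does not by itself give surjectivity onto $\S^\D_+$. The paper closes this with Brouwer's fixed-point theorem applied to $h\mapsto k+t\nabla\H(\nabla\psi(h))$ on a compact convex subset of $\S^\D_+$ (Lemma~\ref{l.X(t,.)properties}).

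The substantive gap is in claim~\eqref{i.t_2}. First, the random free energy $F_N$ is \emph{not} convex in $h$: Lemma~\ref{l.nabla^2F} gives only the near-convexity bound $a\cdot\nabla(a\cdot\nabla F_N)\geq -CN^{-1/2}|a|^2|Z|\,|h^{-1}|^{3/2}$, and this $|Z|$ error has to be carried explicitly through the quenched-fluctuation argument rather than invoked away by a ``standard convex-analysis fact''. Second, your Nishimori reduction of the thermal term $\E\la|\overlap-\la\overlap\ra|\ra$ to the quenched one does not go through as stated: $\la\overlap\ra=N^{-1}X^\intercal\la\bx\ra$ still depends on the signal $X$, and the identity $\E\la g(\bx,X)\ra=\E\la g(\bx,\bx')\ra$ does not rearrange the thermal variance of $\overlap$ into the disorder variance of $\la\overlap\ra$. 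The paper does not work with $\overlap$ directly here. It introduces $\cL=N^{-1}\nabla_h H_N(t,h,\bx)$, whose thermal variance is essentially $N^{-1}(a\cdot\nabla)^2\bar F_N$ up to an $|h^{-1}|$ correction (Lemma~\ref{l.nabla^2F}), proves concentration of $\cL$ by an integration-by-parts-in-$h$ argument exploiting both near-convexity estimates (Lemma~\ref{l.concent_cL}), and finally transfers concentration from $\cL$ to $\overlap$ via a nontrivial input from Barbier (Lemma~\ref{l.QandL}). These two ingredients---the auxiliary quantity $\cL$ and the Barbier comparison---are what your outline is missing.
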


\begin{proof}
Parts~\eqref{i.t_1}, \eqref{i.t_2}, and~\eqref{i.t_3} follow from Propositions~\ref{p.maximizers}, \ref{p.overlap}, and~\ref{p.mmse}, respectively. The short-time differentiability result is from Lemma~\ref{l.HS_condition} and Proposition~\ref{p.C^2}.
\end{proof}

Results in Propositions~\ref{p.maximizers}, \ref{p.overlap}, and~\ref{p.mmse} are slightly more general than the above.
We clarify that even though the short-time differentiability holds at $(t,h)$ with $h \in \S^\D_+\setminus\S^\D_{++}$, the results in Parts~\eqref{i.t_2} and~\eqref{i.t_3} do not seem trivially extendable to such points, including the most interesting case when $h=0$.

The main result~\cite{barbier2021overlap} gives the concentration of $\overlap$ under the measure $\E_h \E \la \cdot\ra_{N,t,h}$ where $\E_h$ is a local average of $h$ over a shrinking set as $N\to\infty$. Moreover, points in the shrinking set converge to $0$. Our Part~\ref{i.t_2} can be interpreted as a pointwise concentration result without average and we are able to recover the value of the limit overlap, but the differentiable point $h$ has to be away from $0$. 

\subsection{Other related works}
Our representation of the limit free energy is based on the Hamilton--Jacobi equation approach. This perspective was first used by Guerra in~\cite{guerra2001sum} and works along the same line include~\cite{barra2,abarra,barramulti,barra2014quantum,barra2008mean,genovese2009mechanical}, which considered equations in finite-dimensions corresponding to the regime of replica symmetry or finite-step replica symmetry breaking. Mourrat started a more mathematical treatment of the approach in~\cite{mourrat2022parisi,mourrat2020hamilton}. In particular, the regime of full replica symmetry breaking is associated with an infinite-dimensional Hamilton--Jacobi equation~\cite{mourrat2020extending,mourrat2021nonconvex,mourrat2023free,HJ_critical_pts,chen2024free,issa2024existence,issa2024hopf}. Through this approach, optimal Bayesian inference models are studied in~\cite{mourrat2021hamilton,HB1,HBJ,chen2022statistical,chen2023free}. The well-posedness of the Hamilton--Jacobi equations are considered in~\cite{chen2022hamilton,chen2022hamiltonCones,issa2024weaksol,issa2024weak}.

Here, we work in the optimal setting, where the system is replica-symmetric so that the relevant equation and the variational formula for the limit free energy are both finite-dimensional.
Works investigating the limit of free energy or equivalently mutual information include~\cite{barbier2016, lelarge2019fundamental, barbier2019adaptive,miolane2017fundamental, barbier2017layered, kadmon2018statistical, luneau2020high,lelarge2019fundamental, mayya2019mutualIEEE, reeves2019geometryIEEE,reeves2020information,lesieur2017statistical, barbier2019adaptive,luneau2019mutual}.
In the non-optimal setting where the prior and the noise are mismatched, the system is in general no longer replica-symmetric. Works in this harder scenario include~\cite{camilli2022inference,barbier2021performance,barbier2022price,pourkamali2022mismatched,guionnet2023estimating}.

As aforementioned, the interplay between the differentiability of the limit free energy and the convergence of overlap is a key feature in the generic spin glass models. This idea was also used in~\cite{HJ_critical_pts,chen2024free,chen2024parisi,issa2024existence} to study the limit of the overlap. A similar idea was employed in~\cite{chen2023on,chen2024conventional} to understand simpler order parameters, such as the self-overlap and the mean magnetization.

\section{Maximizers of variational representations}\label{s.maximizers}

We start by describing how to combine results from different works to get Theorem~\ref{t.cvgF_N_to_f}.
We start with clarifying the meaning of $\psi^*$ and $\H^*$ in~\eqref{e.hopf_inference} and~\eqref{e.hopf-lax_inference}. For any $(-\infty,\infty]$-valued function $g$ defined on a subset of $\S^\D$ containing $\S^\D_+$, its monotone convex conjugate $g^*:\S^\D\to (-\infty,\infty]$ is defined as
\begin{align}\label{e.convex_conjugate}
    g^*(h)= \sup_{h'\in\S^\D_+}\Ll\{h\cdot h'- g(h')\Rr\},\quad\forall h \in \S^\D.
\end{align}

\begin{proof}[Proof of Theorem~\ref{t.cvgF_N_to_f}]
The main part of the theorem and the Hopf formula are direct consequences of~\cite[Theorem~5.1 and Theorem~1.1]{chen2022statistical}. The Hopf--Lax formula is valid due to~\cite[Proposition~6.2]{chen2022hamiltonCones}. To apply this proposition, we need to verify that $\S^\D_+$ satisfies the so-called Fenchel--Moreau property (see~\cite[Definition~6.1]{chen2022hamiltonCones}) which is verified in~\cite[Proposition~B.1]{HBJ}. Also, we need $\psi$ to be increasing, which follows from the monotonicity of $\bar F_N$ implied by the positivity of $\nabla \bar F_N$ evident from~\eqref{e.derF_N=}. Lastly, we need $\H$ to be increasing on $\S^\D_+$, which is proved in~\cite[Lemma~4.2]{chen2022statistical}.
\end{proof}

As a consequence of the envelope theorem (c.f.\ \cite[Theorem~2.21]{HJbook}), we can deduce the following result on the maximizers of variational formulas. 

\begin{proposition}[Properties of maximizers]\label{p.maximizers}
Assume~\ref{i.assume_1_support_X}, \ref{i.assume_2_F_N(0,0)_cvg}, and~\ref{i.assume_3_concent}. Let $f$ be given as in Theorem~\ref{t.cvgF_N_to_f}. Then, for every $(t,h) \in \R_+\times \S^\D_+$, there is a maximizer $h_\star$ of the Hopf formula (the right-hand side in~\eqref{e.hopf_inference} at $(t,h)$) and the following holds:
\begin{enumerate}[start=1,label={\rm (a\arabic*)}]
    \item 
    if 
    $f(t,\cdot)$ is differentiable at $h$, then $h_\star$ is uniquely given by $h_\star = \nabla_h f(t,h)$ and $f(\cdot,h)$ is also differentiable at $t$ with $\partial_t f(t,h) = \H(\nabla_h f(t,h))$;
    \item 
    if 
    $f(\cdot,h)$ is differentiable at $t$, then $h_\star$ satisfies $\H(h_\star) = \partial_t f(t,h)$.
\end{enumerate}

Under an additional assumption that $\H$ is convex on $\S^\D_+$, for every $(t,h)\in \R_+\times \S^\D_+$, there is a maximizer $h_\lozenge$ of the Hopf--Lax formula (the right-hand side in~\eqref{e.hopf-lax_inference} at $(t,h)$) and the following holds:
\begin{enumerate}[start=1,label={\rm (b\arabic*)}]
    \item \label{i.hopf-lax_case_1} 
    if 
    $f(t,\cdot)$ is differentiable at $h$, then $h_\lozenge$ satisfies $\nabla_h\psi(h+h_\lozenge)  = \nabla_h f(t,h)$;
    \item \label{i.hopf-lax_case_2} 
    if 
    $f(\cdot,h)$ is differentiable at $t$ and $\H^*$ is differentiable, then $h_\lozenge$ satisfies\\ $\frac{\d}{\d t}\Ll(-t\H^*(h_\lozenge/t)\Rr)=\partial_t f(t,h)$.
\end{enumerate}
\end{proposition}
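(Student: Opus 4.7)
The plan is to apply an envelope-theorem-style argument to each of the two variational formulas, converting differentiability hypotheses on $f$ into statements about the maximizers. A preliminary observation is that~\eqref{e.hopf_inference} writes $f$ as a supremum of functions affine in $(t,h)$, so $f$ is convex on $\R_+ \times \S^\D_+$. For existence of a Hopf maximizer $h_\star$, I would argue by coercivity: under~\ref{i.assume_1_support_X} one checks directly from~\eqref{e.F_N=} that $\bar F_N$ is Lipschitz uniformly in $N$, hence so is $\psi$, which forces the effective domain of $\psi^*$ to be bounded; combined with upper semicontinuity of the objective on that set, a maximizer exists. Existence of a Hopf--Lax maximizer $h_\lozenge$ (when $\H$ is convex) follows similarly from the superlinear growth of $\H^*$ relative to the at-most-linear growth of $\psi$.

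The heart of (a1) and (a2) is the inclusion $(\H(h_\star), h_\star) \in \partial f(t,h)$, in the sense of convex subgradients, which is immediate from the Hopf formula: for any $(t',h') \in \R_+ \times \S^\D_+$,
\begin{align*}
    f(t', h') \geq h' \cdot h_\star - \psi^*(h_\star) + t' \H(h_\star) = f(t,h) + h_\star \cdot (h' - h) + \H(h_\star)(t'-t).
\end{align*}
Part (a2) is then immediate: differentiability of $f(\cdot,h)$ at $t$ forces the $t$-slope of every subgradient to equal $\partial_t f(t,h)$, so $\H(h_\star) = \partial_t f(t,h)$. For (a1), differentiability of $f(t,\cdot)$ at $h$ similarly forces $h_\star = \nabla_h f(t,h)$, giving uniqueness. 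It remains to show that $f(\cdot,h)$ is then also differentiable at $t$ with derivative $\H(\nabla_h f(t,h))$. Using convexity of $t \mapsto f(t,h)$, one-sided derivatives $\partial_t^\pm f(t,h)$ exist, and the subgradient inclusion already yields $\partial_t^- f(t,h) \leq \H(h_\star) \leq \partial_t^+ f(t,h)$. For the matching bounds, I would pick $s_n \to 0^\pm$ and a maximizer $h_\star^n$ at $(t+s_n,h)$; sub-optimality of $h_\star^n$ at $(t,h)$ gives
\begin{align*}
    f(t+s_n,h) - f(t,h) \leq s_n \H(h_\star^n),
\end{align*}
and compactness of the maximizer set together with uniqueness of $h_\star$ forces $h_\star^n \to h_\star$, yielding $\partial_t^\pm f(t,h) = \H(h_\star)$.

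The proofs of (b1) and (b2) run parallel, starting from the inequality
\begin{align*}
    f(t', h') \geq \psi(h' + h_\lozenge) - t' \H^*(h_\lozenge / t'),
\end{align*}
which is an equality at $(t',h')=(t,h)$. Under~\ref{i.assume_2_F_N(0,0)_cvg}, $\psi$ is $C^1$, so differentiating in $h'$ shows $\nabla_h \psi(h+h_\lozenge) \in \partial_h f(t,h)$; combined with differentiability of $f(t,\cdot)$ at $h$, this yields (b1). Analogously, differentiability of $\H^*$ allows differentiation in $t'$ to identify $\frac{\d}{\d t}(-t\H^*(h_\lozenge/t))$ as the $t$-slope of a subgradient, giving (b2). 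The main obstacle I expect is the continuity argument $h_\star^n \to h_\star$ in the proof of (a1): one must ensure both that the argmax set remains bounded uniformly in a neighborhood of $(t,h)$ and that upper semicontinuity of $\arg\max$ survives on the cone $\S^\D_+$. The Lipschitzness of $\psi$ (bounding the effective domain of $\psi^*$) and the continuity of $\H$ provide the needed ingredients, but the conical domain warrants careful verification of the standard envelope-type arguments.
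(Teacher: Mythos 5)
Your proof is correct and achieves the same conclusions by a more self-contained route than the paper, which proves existence and uniform local boundedness of maximizers and then delegates the remaining work to a cited envelope theorem (\cite[Theorem~2.21]{HJbook}, adapted to $\S^\D_+$). You instead rebuild the envelope-theorem machinery from scratch in convex-analytic language: the Hopf formula writes $f$ as a supremum of affine functions of $(t,h)$, so $f$ is jointly convex and any Hopf maximizer $h_\star$ yields the subgradient inclusion $(\H(h_\star),h_\star)\in\partial f(t,h)$. Differentiability of $f(t,\cdot)$ at $h$ then pins down $h_\star=\nabla_h f(t,h)$ (uniqueness), and the Danskin-type one-sided argument (maximizers $h_\star^n$ at $(t+s_n,h)$, forced to converge to $h_\star$ by compactness of $\mathrm{dom}(\psi^*)$, upper semicontinuity of the objective, and uniqueness of $h_\star$) yields differentiability of $f(\cdot,h)$ at $t$ with derivative $\H(h_\star)$. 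For (b1)--(b2) you replace the cited theorem by the touching-minorant argument for the convex functions $f(t,\cdot)$ and $f(\cdot,h)$, using that $\psi$ is $C^1$ and $\H^*$ is differentiable. Your explicit treatment makes clear a point the paper's terse proof leaves to the reference: the subgradient inclusion alone gives only $\partial_t^- f\le\H(h_\star)\le\partial_t^+ f$, and establishing the remaining equalities genuinely requires the maximizer-convergence argument. Both proofs handle the conical domain by noting that the effective feasible region is a fixed compact set (bounded $\mathrm{dom}(\psi^*)$ for Hopf; superlinear growth of $\H^*$ for Hopf--Lax) independent of $(t,h)$ in a neighborhood, so standard upper hemicontinuity of the argmax applies.
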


\begin{remark}[Alternative Hopf--Lax formula and results]
When $\H$ is convex on $\S^\D_+$, since $\S^\D_+$ is a cone, one can also rewrite the Hopf--Lax formula~\eqref{e.hopf-lax_inference} as
\begin{align}\label{e.f=Hopf-Lax(...th'...)}
    f(t,h)= \sup_{h'\in\S^\D_+}\Ll\{\psi(h+ th')-t \H^*(h')\Rr\},\quad\forall (t,h)\in\R_+\times\S^\D_+
\end{align}
Using this formula and the same argument, we have that there is a maximizer $h_\blacklozenge$ of~\eqref{e.f=Hopf-Lax(...th'...)} satisfying $\nabla_h\psi(h+th_\blacklozenge)=\nabla_h f(t,h)$ in case~\ref{i.hopf-lax_case_1} and $h_\blacklozenge\cdot \nabla_h\psi(h+th_\blacklozenge) -\H^*(h_\blacklozenge)=\partial_t f(t,h)$ in case~\ref{i.hopf-lax_case_2}.\qed
\end{remark}

\begin{remark}
    When $\H$ is strictly convex on $S^D_+$, $\H^*$ is differentiable on $S^D$.
\end{remark}

\begin{proof}[Proof of Proposition~\ref{p.maximizers}]
We only need to verify the existence of maximizers and that they are contained in a fixed compact set when $(t,h)$ varies in a bounded set. The rest follows from the envelope theorem (we use the version in~\cite[Theorem~2.21]{HJbook} which is stated on the entire Euclidean space but can be straightforwardly adapted to $\S^\D_+$ here). 

For the Hopf formula in~\eqref{e.hopf_inference}, since $\psi$ is Lipschitz as argued in Remark~\ref{r.ae_diff}, we can see that $\psi^*$ is equal to $+\infty$ outside a bounded set $K$ independent of $(t,h)$. Since the function $h'\mapsto h'\cdot h-\psi^*(h')+t\H(h')$ is upper semi-continuous, we can deduce the existence of a maximizer, which lies in $K$ independent of $(t,h)$.

For the Hopf--Lax formula in~\eqref{e.hopf-lax_inference}, since $\H$ is bounded on each centered ball, we can see that $\H^*$ grows super linearly in the sense that $\H^*(h')/|h'|$ diverges to $+\infty$ as $|h'|$ grows. This along with the Lipschitzness of $\psi$ implies that maximizers exist and they are contained in a compact set $K_{t,h}$. Also, it is easy to see that if $(t,h)$ varies in a small neighborhood, then $K_{t,h}$ is contained in some fixed compact set.
\end{proof}

\section{Limits of minimal mean square errors}

Recall $\MMSE_N(t,h)$ and $\mmse_N(t,h)$ defined in~\eqref{e.MMSE=} and~\eqref{e.mmse=}.

\begin{proposition}[Limit of MMSE]\label{p.mmse}
Assume~\ref{i.assume_S}.
Let $(t,h)\in\R_+\times \S^\D_+$.
If $h\in \S^\D_{++}$ and $f(t,\cdot)$ is differentiable at $h$, then
\begin{align}\label{e.lim_MMSE}
    \lim_{N\to\infty} \MMSE_N(t,h)  = \E \Ll[X_1^\intercal X_1\Rr] - \nabla_h f(t,h). 
\end{align}
If $t\in \R_{++}$ and $f(\cdot,h)$ is differentiable at $t$, then
\begin{align}\label{e.lim_mmse}
    \lim_{N\to\infty} \mmse_N(t,h)  = \Ll(AA^\intercal\Rr)\cdot \Ll(\E \Ll[X_1^\intercal X_1\Rr]\Rr)^{\otimes \sfp} - \partial_t f(t,h). 
\end{align}
Moreover, the limits in~\eqref{e.lim_MMSE} and~\eqref{e.lim_mmse} are related to the maximizers of variational formulas~\eqref{e.hopf_inference} and~\eqref{e.hopf-lax_inference} via Proposition~\ref{p.maximizers}.
\end{proposition}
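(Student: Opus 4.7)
The plan is to derive MMSE-to-free-energy identities, compute the constant baselines under~\ref{i.assume_S}, and then transfer the pointwise convergence of $\bar F_N$ to convergence of derivatives at the prescribed differentiable points.

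First I would establish
\begin{align*}
\MMSE_N(t,h) = \E\Ll[X_1^\intercal X_1\Rr] - \E\la \overlap\ra_{N,t,h},
\qquad
\mmse_N(t,h) = \frac{1}{N^\sfp}\E\Ll|X^{\otimes\sfp}A\Rr|^2 - \partial_t \bar F_N(t,h).
\end{align*}
The first follows from expanding $(X-\hat X)^\intercal(X-\hat X)$ with $\hat X=\E[X|Y,\bar Y]=\la\bx\ra$, collapsing cross-terms via the tower property into $\frac{1}{N}\E[X^\intercal X] - \frac{1}{N}\E[\la\bx\ra^\intercal\la\bx\ra]$, using~\ref{i.assume_S} to identify the first term with $\E[X_1^\intercal X_1]$, and applying Nishimori $(X,\bx)\stackrel{d}{=}(\bx',\bx)$ to recognize $\frac{1}{N}\E[\la\bx\ra^\intercal\la\bx\ra]=\frac{1}{N}\E\la\bx^\intercal\bx'\ra=\E\la\overlap\ra$. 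The second combines the analogous Pythagorean decomposition $\mmse_N=\frac{1}{N^\sfp}\E|X^{\otimes\sfp}A|^2-\frac{1}{N^\sfp}\E|\la\bx^{\otimes\sfp}A\ra|^2$ with the Gaussian integration-by-parts identity $\partial_t \bar F_N(t,h)=\frac{1}{N^\sfp}\E|\la\bx^{\otimes\sfp}A\ra|^2$, which itself requires tracking both the explicit $\sqrt{2t/N^{\sfp-1}}$-dependence in $H_N$ and the implicit $t$-dependence of $Y$, and then collapsing a cross-term via Nishimori.

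Second, using $(X^{\otimes\sfp})^\intercal X^{\otimes\sfp}=(X^\intercal X)^{\otimes\sfp}$, the baseline becomes $\frac{1}{N^\sfp}\E|X^{\otimes\sfp}A|^2=(AA^\intercal)\cdot\E[(X^\intercal X/N)^{\otimes\sfp}]$. Under~\ref{i.assume_S}, $X^\intercal X/N$ is a bounded sum of i.i.d.\ rank-one matrices converging almost surely to $\E[X_1^\intercal X_1]$, so by bounded convergence applied to the $\sfp$-fold Kronecker power we obtain $\frac{1}{N^\sfp}\E|X^{\otimes\sfp}A|^2\to (AA^\intercal)\cdot(\E[X_1^\intercal X_1])^{\otimes\sfp}$.

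Third, I would pass to the limit in the derivatives. For the $\MMSE_N$-limit, Proposition~\ref{p.overlap} gives $\E\la|\overlap-\nabla_h f(t,h)|\ra\to 0$ whenever $h\in\S^\D_{++}$ and $f(t,\cdot)$ is differentiable at $h$, so by the triangle inequality $\E\la\overlap\ra\to\nabla_h f(t,h)$, which proves~\eqref{e.lim_MMSE}. For the $\mmse_N$-limit, I would observe that $\bar F_N(\cdot,h)$ is convex on $\R_+$: from Step~1, $\partial_t\bar F_N=\frac{1}{N^\sfp}\E|X^{\otimes\sfp}A|^2-\mmse_N(t,h)$, and $\mmse_N(t,h)$ is non-increasing in $t$ by the standard data-processing/I-MMSE monotonicity (more SNR yields smaller error), making $\partial_t\bar F_N$ non-decreasing. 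Since $\bar F_N(\cdot,h)\to f(\cdot,h)$ pointwise by Theorem~\ref{t.cvgF_N_to_f} and all are convex, the classical fact that pointwise-convergent convex functions have derivatives converging at every differentiable point of the limit yields $\partial_t\bar F_N(t,h)\to\partial_t f(t,h)$. Plugging this into the second identity of Step~1 proves~\eqref{e.lim_mmse}, and the link to maximizers of the variational formulas follows directly from Proposition~\ref{p.maximizers}.

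The main obstacle is the $\partial_t$-identity of Step~1: the derivative must track both the explicit $\sqrt{2t/N^{\sfp-1}}$-dependence and the implicit $t$-dependence of $Y=\sqrt{2t/N^{\sfp-1}}X^{\otimes\sfp}A+W$, producing several terms that only cancel after Gaussian integration by parts on $W$ and one application of Nishimori. A secondary, easier task is verifying the monotonicity of $\mmse_N$ in $t$, which follows from the standard observation that increasing SNR never increases Bayesian MMSE.
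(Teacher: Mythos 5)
Your plan is correct and arrives at the same identities, but it departs from the paper's proof in two places that are worth noting. For the $\mmse$ baseline, you use the Kronecker identity $(X^{\otimes\sfp})^\intercal X^{\otimes\sfp}=(X^\intercal X)^{\otimes\sfp}$ to reduce $N^{-\sfp}\E|X^{\otimes\sfp}A|^2$ to $(AA^\intercal)\cdot\E[(X^\intercal X/N)^{\otimes\sfp}]$ and then invoke the law of large numbers with bounded convergence; the paper instead expands the squared norm as a sum over multi-indices and shows directly that only the tuples with distinct entries contribute in the limit. Your route is cleaner, at the small cost of needing the a.s.\ convergence of the empirical covariance rather than a one-line estimate on $|\Delta_N|/N^\sfp$. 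For the convergence of derivatives, you split the task: the $h$-derivative is obtained by citing Proposition~\ref{p.overlap} (whose part \eqref{e.limE<O_N>=} already encodes the relevant convexity argument), while for the $t$-derivative you re-derive convexity of $\bar F_N(\cdot,h)$ from the I-MMSE monotonicity $\partial_t\mmse_N\le 0$ and then apply the standard theorem on convergence of derivatives of convex functions. The paper instead invokes the joint convexity of $\bar F_N$ established in \cite[Lemma~2.3]{chen2022statistical} once and reads off both limits \eqref{e.limdhF_N}--\eqref{e.limdtF_N} from \cite[Theorem~25.7]{rockafellar1970convex}. Your argument buys a self-contained justification for the $t$-convexity at the price of an extra monotonicity lemma, whereas the paper's is shorter by leaning on the known convexity result; both are sound, and the remaining steps (Pythagorean decompositions, Nishimori, identification of $\nabla_h\bar F_N$ and $\partial_t\bar F_N$ with the relevant second moments) coincide with the paper's.
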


\begin{proof}
We need the convexity of $\bar F_N$ proven in~\cite[Lemma~2.3]{chen2022statistical}, which also implies the convexity of $f$.
Fix $(t,h)\in \R_{+}\times \S^\D_{+}$, since $\bar F_N$ is differentiable and $\bar F_N$ converges to $f$ pointwise, it is a classical result \cite[Theorem~25.7]{rockafellar1970convex} in convex analysis that
\begin{gather}
    \text{$h\in\S^\D_{++}$ and $f(t,\cdot)$ is differentiable at $h$}\quad \Longrightarrow\quad \lim_{N\to\infty}\nabla_h \bar F_N(t,h)=\nabla_h f(t,h);\label{e.limdhF_N}
    \\
    \text{$t\in\R_{++}$ and $f(\cdot,h)$ is differentiable at $t$}\quad \Longrightarrow\quad \lim_{N\to\infty}\partial_t \bar F_N(t,h)=\partial_t f(t,h). \label{e.limdtF_N}
\end{gather}
Since we have fixed $(t,h)$ and the value of $N$ is clear from the context, we write $\la\cdot\ra=\la\cdot\ra_{N,t,h}$ to simplify the notation in~\eqref{e.<g(x)>=}.
We also recall the following computation of derivatives from \cite[(2.1) and (2.2)]{chen2022statistical}:
\begin{align}\label{e.derF_N=}
    \partial_t \bar F_N(t,h)= \frac{1}{N^\sfp}\E \Ll|\la\bx^{\otimes \sfp}A\ra\Rr|^2,\qquad \nabla_h\bar F_N(t,h) = \frac{1}{N}\E \Ll[\la \bx\ra^\intercal\la \bx\ra\Rr].
\end{align}

Let us first prove~\eqref{e.lim_mmse}. For simplicity we write $\tilde X = X^{\otimes \sfp}A$ and $\tilde \bx = \bx^{\otimes \sfp}A$. We start with
\begin{align*}
    N^\sfp\, \mmse_N(t,h) \stackrel{\eqref{e.mmse=}}{=}\E \Ll|\tilde X\Rr|^2-\E \Ll|\E\Ll[\tilde X\ \big|\ Y,\bar Y\Rr]\Rr|^2\stackrel{\eqref{e.<g(x)>=}}{=} \E \Ll|\tilde X\Rr|^2 - \E \Ll|\la \tilde\bx\ra\Rr|^2 
    \\
    \stackrel{\eqref{e.derF_N=}}{=}\E \Ll|\tilde X\Rr|^2 - N^\sfp\partial_t \bar F_N(t,h).
\end{align*}
Hence,~\eqref{e.lim_mmse} follows from~\eqref{e.limdtF_N} and
\begin{align}\label{e.N^-pE|X|^2}
    \lim_{N\to\infty}N^{-\sfp}\E \Ll|\tilde X\Rr|^2  = \Ll(AA^\intercal\Rr)\cdot \Ll(\E \Ll[X_1^\intercal X_1\Rr]\Rr)^{\otimes \sfp}
\end{align}
which we explain in the following. We write $\bn =(n_1,\dots ,n_\sfp)\in \{1,\dots,N\}^\sfp$, $\bd = (d_1,\dots,d_\sfp)\in\{1,\dots,\D\}^\sfp$, and $\Ll(X^{\otimes \sfp}\Rr)_{\bn\bd}=\prod_{i=1}^\sfp X_{n_id_i}$. Then, we can rewrite $X^{\otimes \sfp} = \Ll(\Ll(X^{\otimes \sfp}\Rr)_{\bn\bd}\Rr)$ and $A=(A_{\bn l})$. In this notation, we compute
\begin{align*}
    \Ll|\tilde X\Rr|^2 =\Ll|X^{\otimes \sfp}A\Rr|^2=\sum_{l,\,\bn}\sum_{\bd,\,\bd'}\Ll(X^{\otimes \sfp}\Rr)_{\bn\bd}\Ll(X^{\otimes \sfp}\Rr)_{\bn\bd'}A_{\bd l}A_{\bd' l}
\end{align*}
where the summations are over all possible values of these tuples. Let $\Delta_N$ be the collection of tuples $\bn$ where all entries are distinct.
Since rows of $X$ are independent, we have that if $\bn\in\Delta_N$, then $\E \Ll[\Ll(X^{\otimes \sfp}\Rr)_{\bn\bd}\Ll(X^{\otimes \sfp}\Rr)_{\bn\bd'}\Rr]= \prod_{i=1}^\sfp \E \Ll[X_{1d_i}X_{1d'_i}\Rr]$.
Since entries of $X$ are assumed to be in $[-1,+1]$ as in~\ref{i.assume_1_support_X} and $\lim_{N\to\infty}\Ll|\Delta_N\Rr|/N^\sfp =1$, we thus have
\begin{align*}
    N^{-\sfp}\E \Ll|\tilde X\Rr|^2  = o_N(1) + \Ll|\Delta_N\Rr|^{-1}\E \sum_{l,\,\bn\in\Delta_N}\sum_{\bd,\,\bd'}\Ll(X^{\otimes \sfp}\Rr)_{\bn\bd}\Ll(X^{\otimes \sfp}\Rr)_{\bn\bd'}A_{\bd l}A_{\bd' l}
    \\
    =o_N(1)+\sum_{l}\sum_{\bd,\,\bd'}\prod_{i=1}^\sfp \E \Ll[X_{1d_i}X_{1d'_i}\Rr]A_{\bd l}A_{\bd' l}
\end{align*}
which implies~\eqref{e.N^-pE|X|^2}

Next, we turn to~\eqref{e.lim_MMSE}. Similarly, as before, we can compute
\begin{align*}
    N\,  \MMSE_N(t,h) \stackrel{\eqref{e.MMSE=},\eqref{e.<g(x)>=}}{=} \E \Ll[X^\intercal X\Rr] - \E \Ll[\la \bx\ra^\intercal \la\bx\ra\Rr] \stackrel{\eqref{e.derF_N=}}{=}\E \Ll[X^\intercal X\Rr] - N\nabla_h\bar F_N(t,h).
\end{align*}
Then,~\eqref{e.lim_MMSE} follows from~\eqref{e.limdhF_N} and the easy observation that $\E \Ll[X^\intercal X\Rr] = N \E \Ll[X^\intercal_1 X_1\Rr]$ due to the independence of rows in $X$.
\end{proof}



\section{Concentration of overlap}

Recall the overlap matrix $Q$ defined in~\eqref{e.overlap=}.

\begin{proposition}[Concentration of overlap]\label{p.overlap}
Assume~\ref{i.assume_1_support_X}, \ref{i.assume_2_F_N(0,0)_cvg}, and~\ref{i.assume_3_concent}.
If $f(t,\cdot)$ is differentiable at $h\in\S^\D_{++}$ for some $t\in\R_+$, then the following holds:
\begin{itemize}
    \item the averaged overlap converges:
    \begin{align}\label{e.limE<O_N>=}
        \lim_{N\to\infty}\E \la \overlap\ra_{N,t,h} = \nabla_h f(t,h);
    \end{align}

    \item the overlap concentrations:
    \begin{align}\label{e.limE<|O-E<O>|>=0}
    \lim_{N\to\infty} \E \la\Ll|\overlap - \E \la \overlap\ra_{N,t,h} \Rr|\ra_{N,t,h} =0.
    \end{align}
\end{itemize}
Moreover, the limit of $Q$ is related to the maximizers of variational formulas~\eqref{e.hopf_inference} and~\eqref{e.hopf-lax_inference} via Proposition~\ref{p.maximizers}; and to the limit of MMSE via Proposition~\ref{p.mmse}.
\end{proposition}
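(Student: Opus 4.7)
\medskip

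\noindent\textbf{Proof proposal.}
My plan is to establish~\eqref{e.limE<O_N>=} and~\eqref{e.limE<|O-E<O>|>=0} in sequence: the first by a short Nishimori-plus-convex-analysis argument, and the second by upgrading it to an $L^2$ estimate via a perturbation of the free energy in $h$.

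For~\eqref{e.limE<O_N>=}, I would use the Nishimori identity (conditional on $(Y,\bar Y)$, the signal $X$ is distributed as a sample drawn from $\la\cdot\ra_{N,t,h}$), which combined with~\eqref{e.derF_N=} yields
\begin{align*}
\E\la \overlap\ra_{N,t,h}=N^{-1}\E\Ll[X^\intercal\la\bx\ra_{N,t,h}\Rr]=N^{-1}\E\Ll[\la\bx\ra_{N,t,h}^\intercal\la\bx\ra_{N,t,h}\Rr]=\nabla_h\bar F_N(t,h).
\end{align*}
Since $\bar F_N(t,\cdot)$ is convex by \cite[Lemma~2.3]{chen2022statistical} (as reused in the proof of Proposition~\ref{p.mmse}) and converges pointwise to $f(t,\cdot)$ by Theorem~\ref{t.cvgF_N_to_f}, the hypothesized differentiability of $f(t,\cdot)$ at the interior point $h\in\S^\D_{++}$ forces $\nabla_h\bar F_N(t,h)\to\nabla_h f(t,h)$ by Rockafellar's gradient convergence~\cite[Theorem~25.7]{rockafellar1970convex}, exactly as in~\eqref{e.limdhF_N}.

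For~\eqref{e.limE<|O-E<O>|>=0}, since $|\overlap|$ is uniformly bounded under~\ref{i.assume_1_support_X}, Cauchy--Schwarz reduces the $L^1$ concentration to the $L^2$ statement $\E\la|\overlap-\E\la \overlap\ra|^2\ra\to 0$. I would split this as
\begin{align*}
\E\la|\overlap-\E\la \overlap\ra|^2\ra=\E\Ll|\la \overlap\ra-\E\la \overlap\ra\Rr|^2+\E\la|\overlap-\la \overlap\ra|^2\ra
\end{align*}
(disorder fluctuation plus thermal fluctuation, the cross term vanishing). The disorder term I would treat by a stochastic version of Rockafellar's theorem applied to the convex random functions $F_N(t,\cdot)$: under~\ref{i.assume_3_concent}, these converge in $L^2$ on compacts to the deterministic limit $f(t,\cdot)$, so $\nabla_h F_N(t,h)\to\nabla_h f(t,h)$ in $L^2$ at the differentiable interior point $h$; absorbing the explicit Gaussian integration-by-parts correction in the noise $Z$ (the same correction that underlies~\eqref{e.derF_N=}) then transfers this to $\la \overlap\ra\to\nabla_h f(t,h)$ in $L^2$. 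The thermal term, by Nishimori, equals $\tfrac12\E\la|R^{12}-R^{13}|^2\ra_{N,t,h}$ with $R^{ab}=N^{-1}(\bx^a)^\intercal\bx^b$ the between-replica overlaps, and matches (modulo analogous Gaussian corrections) a directional Hessian of $\bar F_N(t,\cdot)$ at $h$. This Hessian I would control via the non-negative finite difference $\bar F_N(t,h+\delta M)+\bar F_N(t,h-\delta M)-2\bar F_N(t,h)$ (for $M\in\S^\D$), whose limit as $N\to\infty$ is $f(t,h+\delta M)+f(t,h-\delta M)-2f(t,h)=o(\delta)$ by differentiability of $f(t,\cdot)$ at $h$; combining this with convexity (non-negativity of the second-order difference) and the $C^\infty$ regularity of $\bar F_N$ in $h$ coming from Gaussian smoothing by the linear channel, one transfers the integrated bound to a pointwise estimate, in parallel with the overlap-concentration strategy of~\cite{HJ_critical_pts,chen2024free,issa2024existence}.

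The main obstacle is the thermal fluctuation: differentiability of $f$ at $h$ does not supply a pointwise Hessian bound for $\bar F_N$, and the passage from the integrated second-order difference to a pointwise estimate (via convexity plus Gaussian smoothing of the linear channel) is the technical heart of the argument. Since $\S^\D$ is finite-dimensional, testing against a basis of directions $M$ then completes the concentration in Frobenius norm.
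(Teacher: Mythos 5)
Your argument for \eqref{e.limE<O_N>=} is correct and is exactly what the paper does: Nishimori turns $\E\la Q\ra$ into $\nabla_h\bar F_N(t,h)$ via \eqref{e.derF_N=}, and convexity of $\bar F_N(t,\cdot)$ together with pointwise convergence to $f(t,\cdot)$ and differentiability at the interior point $h$ gives \eqref{e.limdhF_N} by Rockafellar.

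For \eqref{e.limE<|O-E<O>|>=0}, however, there is a real gap. You propose to control the thermal and disorder fluctuations of $Q$ directly against directional Hessians of $\bar F_N(t,\cdot)$, ``modulo analogous Gaussian corrections.'' But the Hessian identity that comes out of differentiating $\bar F_N$ twice in $h$ controls the variance of $\cL = N^{-1}\nabla_h H_N(t,h,\bx)$, not of $Q$. Concretely, $\cL$ contains the noise term $\sqrt{2}\,\cD_{\sqrt h}(a)\cdot\bx^\intercal Z$ (see~\eqref{e.a.L=}), which is absent from $Q$, and \eqref{e.nabla^2barF_N>} gives $a\cdot\nabla(a\cdot\nabla\bar F_N)\geq N\E\la(a\cdot\cL-\la a\cdot\cL\ra)^2\ra - C|a|^2|h^{-1}|$; nothing of this form holds with $Q$ in place of $\cL$. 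The passage from concentration of $\cL$ to concentration of $Q$ is not a bounded ``Gaussian correction'': it is the content of Lemma~\ref{l.QandL} (extracted from~\cite{barbier2021overlap}), whose bounds are \emph{sublinear} in the $\cL$-fluctuations, e.g.\ $\E\la|Q-\la Q\ra|^2\ra\leq C\ell_0(N)+CN^{-1/2}$ and $\E\la|Q-\E\la Q\ra|^2\ra\leq C\ell_0(N)^{1/2}+C\ell_1(N)+CN^{-1/4}$, with $\ell_0,\ell_1$ defined as square roots of variances of $\cL$. Your sketch never isolates this $\cL$-to-$Q$ transfer as a separate technical lemma; it treats the two order parameters as interchangeable up to a correction term, which is precisely what needs proving. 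A minor additional slip: you invoke a ``stochastic Rockafellar'' argument by calling the random functions $F_N(t,\cdot)$ convex, but by~\eqref{e.nabla^2F_N>} they are only approximately convex pointwise in the disorder (the Hessian has a small negative contribution of order $N^{-1/2}|Z||h^{-1}|^{3/2}$), which is why the paper's disorder-fluctuation step (Step~2 of Lemma~\ref{l.concent_cL}) runs a Taylor-expansion argument with explicit error terms rather than Rockafellar. The correct route, as in the paper, is: prove concentration of $\cL$ around $\E\la\cL\ra$ (thermal part via second-difference and convexity of $\bar F_N$; disorder part via~\ref{i.assume_3_concent} and~\eqref{e.nabla^2F_N>}), then invoke the Barbier-type comparison Lemma~\ref{l.QandL} to transfer to $Q$.
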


\begin{remark}
The results in Proposition~\ref{p.overlap} still hold if we replace $\overlap$ given in~\eqref{e.overlap=} by
\begin{align}\label{e.R=}
    R=\frac{1}{N}\bx^\intercal \bx'
\end{align}
where $\bx$ and $\bx'$ are two independent samples from the Gibbs measure $\la\cdot\ra_{N,t,h}$ as in~\eqref{e.<g(x)>=}.
This claim follows from a special case of the well-known Nishimori identity (e.g.\ see~\cite[Proposition 4.1]{HJbook}): for any bounded measurable $g:\R^{N\times \D}\times \R^{N\times \D}\to\R$, we have
\begin{align}\label{e.Nishimori}
    \E \la g(\bx,\bx') \ra_{N,t,h}=\E \la g(\bx,X)\ra_{N,t,h}.
\end{align}
Indeed,~\eqref{e.Nishimori} implies that $\overlap$ and $R$ have the same distribution under $\E \la\cdot\ra_{N,t,h}$. \qed

\end{remark}

To prove~\eqref{e.limE<|O-E<O>|>=0}, we need to compare $\overlap$ with $\cL$ defined by
\begin{align}\label{e.cL=}
    \cL = N^{-1}\nabla_h H_N(t,h,\bx)
\end{align}
where $H_N(t,h,\bx)$ is given in~\eqref{e.H_N(t,h,x)}. We view $\cL$ as a matrix in $\S^\D$ and it is well-defined for $h\in\S^\D_{++}$. Indeed, for $h\in\S^\D_{++}$ and $a\in\S^\D$, we have $h+\eps a\in\S^\D_{++}$ for sufficiently small $\eps$ and its matrix square root exists and is positive definite. Then, we can compute
\begin{align}\label{e.a.L=}
    a\cdot \cL =N^{-1}\frac{\d}{\d \eps}H_N(t,h+\eps a,\bx)\Big|_{\eps=0} = N^{-1}\Ll(\sqrt{2}\cD_{\sqrt{h}}(a)\cdot \bx^\intercal Z +  2a\cdot \bx^\intercal X-a\cdot \bx^\intercal \bx\Rr)
\end{align}
where $\cD_{\sqrt{h}}(a)$ is the derivative of the function $h\mapsto \sqrt{h}$ at $h$ along the direction of $a$, namely,
\begin{align}\label{e.D_sqrt(h)(a)}
    \cD_{\sqrt{h}}(a) = \lim_{\eps\to0}\eps^{-1}\Ll(\sqrt{h+\eps a}-\sqrt{h}\Rr).
\end{align}
We recall from~\cite[(3.7)]{mourrat2020hamilton} a useful estimate:
\begin{align}\label{e.|D_sqrt(h)(a)|<}
    \Ll|\mathcal{D}_{\sqrt{h}}(a)\Rr|\leq C|a|\Ll|h^{-1}\Rr|^\frac{1}{2}
\end{align}
where $C>0$ is an absolute constant and $h^{-1}$ is the matrix inverse of $h\in\S^\D_{++}$.

As in~\cite{barbier2021overlap}, it is more convenient to work with $\cL$ instead of $\overlap$ because $\cL$ is closely related to the free energy via:
\begin{align}\label{e.<L>=nabla_hF_N}
    \la \cL\ra_{N,t,h} = \nabla_h  F_N(t,h),\qquad \E \la \cL\ra_{N,t,h} = \nabla_h \bar F_N(t,h).
\end{align}
Indeed, from~\eqref{e.F_N=} and~\eqref{e.cL=}, we can easily deduce~\eqref{e.<L>=nabla_hF_N}.
We will first derive the concentration of $\cL$ and then relate it to that of $\overlap$ via the following results extractable from~\cite{barbier2021overlap}. 

\begin{lemma}[\cite{barbier2021overlap} Relation between $\overlap$ and $\cL$]
\label{l.QandL}
Assume~\ref{i.assume_1_support_X}.
Let $t\in\R_+$ and $h\in \S^\D_{++}$. 
For brevity, write $\la\cdot\ra=\la \cdot\ra_{N,t,h}$ and set
\begin{align}\label{e.ell(N)=}
    \ell_0(N) = \Ll(\E \la\Ll|\cL-\la \cL\ra\Rr|^2\ra\Rr)^\frac{1}{2},\qquad \ell_1(N) = \Ll(\E \la\Ll|\cL-\E\la \cL\ra\Rr|^2\ra\Rr)^\frac{1}{2}.
\end{align}
Then, there is a constant $C$ such that, for every $N\in\N$,
\begin{gather}
    \E \la \Ll|\overlap- \la\overlap \ra\Rr|^2 \ra \leq C\ell_0(N)+ CN^{-\frac{1}{2}}, \label{e.QandL(1)}
    \\
    \E \la \Ll|\overlap- \la R \ra\Rr|^2 \ra \leq C\ell_0(N)+ CN^{-\frac{1}{2}}, \label{e.QandL(2)}
    \\
    \E \la \Ll|\overlap- \E\la\overlap \ra\Rr|^2\ra \leq C\ell_0(N)^\frac{1}{2}+C\ell_1(N)+CN^{-\frac{1}{4}}. \label{e.QandL(3)}
\end{gather}
\end{lemma}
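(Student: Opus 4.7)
The backbone of the argument is the matrix identity obtained from~\eqref{e.a.L=}: viewing $\cL\in\S^\D$ and using that $a\cdot\overlap = a\cdot\overlap^{\mathrm{sym}}$ for every symmetric $a$, we have
\[
\cL \;=\; \mathcal{N}_Z \;+\; 2\,\overlap^{\mathrm{sym}} \;-\; \frac{1}{N}\bx^\intercal \bx,
\]
where $\overlap^{\mathrm{sym}} = \tfrac{1}{2}(\overlap+\overlap^\intercal)$ and $\mathcal{N}_Z \in \S^\D$ is characterized by $a\cdot\mathcal{N}_Z = \sqrt{2}\,N^{-1}\cD_{\sqrt{h}}(a)\cdot\bx^\intercal Z$ for all $a\in\S^\D$. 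The strategy is to transfer the concentration of $\cL$, encoded in $\ell_0$ and $\ell_1$, into concentration of $\overlap$ by disposing of the Gaussian-noise term $\mathcal{N}_Z$, of the antisymmetric piece $\overlap - \overlap^{\mathrm{sym}}$, and of the self-overlap $\bx^\intercal\bx/N$ appearing in this decomposition.

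The plan proceeds in four steps. \emph{Step 1 (noise).} A standard Gaussian integration by parts in $Z$, combined with the Lipschitz bound~\eqref{e.|D_sqrt(h)(a)|<} on $\cD_{\sqrt{h}}$ and the entrywise bound $|\bx_{ij}|\le 1$ from~\ref{i.assume_1_support_X}, yields $\E\la|\mathcal{N}_Z|^2\ra \le C(h)/N$ and similarly for the centered version; this produces the $N^{-1/2}$ (and $N^{-1/4}$ in~\eqref{e.QandL(3)}) rates. \emph{Step 2 (skew part).} The Nishimori identity~\eqref{e.Nishimori} implies $(X,\bx)\stackrel{d}{=}(\bx,X)$ under $\E\la\cdot\ra$, so $\E\la \overlap - \overlap^{\mathrm{sym}}\ra = 0$; expanding the second moment entry-by-entry and again invoking Nishimori to swap $X\leftrightarrow\bx$ in cross terms shows $\E\la|\overlap - \overlap^{\mathrm{sym}}|^2\ra = O(N^{-1})$, absorbing the skew part into the $N^{-1/2}$ error. \emph{Step 3 (transferring fluctuations).} To pass from $\cL$-fluctuations to $\overlap^{\mathrm{sym}}$-fluctuations despite the self-overlap term, I would exploit the a priori $L^\infty$ bound $|\overlap-\la\overlap\ra|\le 2D$ from~\ref{i.assume_1_support_X} to upgrade
\[
\E\la|\overlap-\la\overlap\ra|^2\ra \;\le\; 2D\,\E\la|\overlap-\la\overlap\ra|\ra,
\]
reducing~\eqref{e.QandL(1)} to an $L^1$ bound. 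The $L^1$ bound follows from the triangle inequality applied to the decomposition, together with $\E\la|\cL-\la\cL\ra|\ra\le \ell_0$ (Jensen), Step 1, and a careful treatment of the self-overlap term that re-uses~\eqref{e.a.L=} backwards and again linearizes via the $L^\infty$ bound to close the resulting linear inequality. Estimate~\eqref{e.QandL(2)} is proved analogously, using $\la R\ra = N^{-1}\la\bx\ra^\intercal\la\bx\ra$ by independence of replicas under the tensorized Gibbs. \emph{Step 4 (disorder fluctuation).} For~\eqref{e.QandL(3)}, split $\overlap - \E\la\overlap\ra = (\overlap-\la\overlap\ra)+(\la\overlap\ra-\E\la\overlap\ra)$: the first is bounded by~\eqref{e.QandL(1)}, while for the disorder fluctuation of $\la\overlap\ra$ one uses $\la\cL\ra = \nabla_h F_N$ from~\eqref{e.<L>=nabla_hF_N} together with the identity $\E|\la\cL\ra - \E\la\cL\ra|^2 = \ell_1(N)^2 - \ell_0(N)^2$; the $L^\infty$ upgrade followed by one further Cauchy--Schwarz step produces the weaker $\ell_0^{1/2} + \ell_1 + N^{-1/4}$ rates.

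The main technical obstacle is the apparent circularity in Step 3: the self-overlap fluctuation $\bx^\intercal\bx/N - \la\bx^\intercal\bx/N\ra$ and the overlap fluctuation $\overlap^{\mathrm{sym}} - \la\overlap^{\mathrm{sym}}\ra$ are algebraically linked via~\eqref{e.a.L=}, so a bare $L^2$ triangle inequality does not close. The resolution---pioneered in~\cite{barbier2021overlap}---is to use the a priori boundedness of the Gibbs samples to linearize the $L^2$ estimate into an $L^1$ one, which prevents the self-overlap contribution from reabsorbing the very quantity being bounded and allows the argument to close at the sharp rate $O(\ell_0 + N^{-1/2})$.
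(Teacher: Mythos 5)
Your proposal attempts a self-contained re-derivation of Barbier's estimates via the explicit decomposition $\cL = \mathcal{N}_Z + 2\overlap^{\mathrm{sym}} - N^{-1}\bx^\intercal\bx$, whereas the paper's proof of Lemma~\ref{l.QandL} is a citation-and-translation argument: it explains how the setting of~\cite{barbier2021overlap} covers the present one (up to the $-\tfrac12$ factor in the Hamiltonian, the entrywise vs.\ directional definition of $\cL$, and the identification $Q^{(12)} = R$), and then extracts~\eqref{e.QandL(1)}--\eqref{e.QandL(3)} from the relevant displays there, carefully substituting the pointwise error rates $\ell_0(N),\ell_1(N)$ for the locally-averaged ones. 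So the two routes are genuinely different. However, your route has a real gap in Step~1.

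You claim $\E\la|\mathcal{N}_Z|^2\ra \le C(h)/N$, as if $N^{-1}\bx^\intercal Z$ were a sum of $N$ independent mean-zero terms and hence of order $N^{-1/2}$. This would hold if $\bx$ were independent of $Z$, but $\bx$ is sampled from $\la\cdot\ra_{N,t,h}$, which depends on $Z$ through $\bar Y = X\sqrt{2h} + Z$. Because of this correlation, $N^{-1}\bx^\intercal Z$ is generically of order $O(1)$, and Gaussian integration by parts in $Z$ does not reduce its size---it converts it into overlap quantities that remain $O(1)$. Concretely, write $Z = Z' + (\bx - X)\sqrt{2h}$ with $Z' := \bar Y - \bx\sqrt{2h}$. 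By the Nishimori property, $(\bx, Z')$ is distributed as $(X, Z)$, so $Z'$ \emph{is} independent of $\bx$ and $N^{-1}\bx^\intercal Z'$ has variance $O(N^{-1})$; but the correction $(\bx - X)\sqrt{2h}$ contributes $N^{-1}\bx^\intercal(\bx - X)\sqrt{2h} = (N^{-1}\bx^\intercal\bx - \overlap^\intercal)\sqrt{2h}$, which is $O(1)$ and carries the very overlaps you are trying to isolate. So $\mathcal{N}_Z$ is not noise of size $N^{-1/2}$: it is noise \emph{plus} an $O(1)$ overlap term, and your decomposition does not separate signal from noise as needed.

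The fix---and what is really happening in~\cite{barbier2021overlap}---is to rewrite the noise in terms of $Z'$ \emph{before} decomposing. When you do, the $O(1)$ correction merges with your $2\overlap^{\mathrm{sym}} - N^{-1}\bx^\intercal\bx$ terms (up to commutators of $a$ with $h$); at $h = I$ one finds exactly $\cL = \overlap^{\mathrm{sym}} + \tfrac{1}{\sqrt{2}}\bigl(N^{-1}\bx^\intercal Z'\bigr)^{\mathrm{sym}}$, with the self-overlap cancelling. That clean starting point is what makes the subsequent $L^2$-to-$L^1$ linearization close. A secondary issue: your Step~2 asserts $\E\la|\overlap - \overlap^{\mathrm{sym}}|^2\ra = O(N^{-1})$ from Nishimori alone, but Nishimori only gives $\E\la\overlap\ra = \E\la\overlap^\intercal\ra$; the $k \ne k'$ cross-terms in the second moment do not vanish under~\ref{i.assume_1_support_X} alone without additional structure such as row-exchangeability as in~\ref{i.assume_S}, so this either needs justification or must be folded into the $\ell_0$-controlled part of the bound.
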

Here, the constant $C$ depends on $\D$, $h$.

\begin{proof}
These results are contained in~\cite{barbier2021overlap}. The setting in~\cite[Section~2.1]{barbier2021overlap} is more general and covers our current setting. There is an additional factor $-\frac{1}{2}$ in front of the Hamiltonian~\cite{barbier2021overlap} (see display (3.4) therein) compared with~\eqref{e.H_N(t,h,x)}. Also, $\cL$ therein (see (4.1) and (4.2)) are defined as differentiation of the Hamiltonian with respect to each entry of $h$. This is related to our definition in~\eqref{e.a.L=} through choosing the testing matrix $a$ to be $1$ in certain entries and $0$ otherwise. However, in~\cite{barbier2021overlap}, diagonal entries and off-diagonal ones in $\cL$ are weighted differently, resulting in a difference by a factor of $2$. In conclusion, each entry in our $\cL$ is equal to that of $\cL$ in~\cite{barbier2021overlap} up to a constant factor $-2^c$ with $c\in \Z$ depending only on the position of the entry.

In terms of overlaps, $\overlap$ are defined in the same way (see \cite[(2.2)]{barbier2021overlap}). 
Here, $R$ in~\eqref{e.R=} is equal to $Q^{(12)}$ in~\cite[(3.9)]{barbier2021overlap}.
Factors $K$, $S$, $n$ there correspond to $\D$, $1$, $N$ here.

Notice that the magnitude of the derivative of $\sqrt{h}$ blows up when $h$ approaches $0$. In~\cite{barbier2021overlap}, this is controlled by the parameter $s_n$ which determines the range for $h$ (denoted by $\lambda_n$; see (3.2) there). We absorb $s_n$ into the constant $C$ since $h$ is fixed.
Results in~\cite{barbier2021overlap} hold with an additional local average $\E_\lambda$ of $\lambda$ (corresponding to $h$ here). There, this is needed to have the concentration of $\cL$. In this lemma, since we encode the bounds in terms of $\ell_0(N)$ and $\ell_1(N)$, we do not need this local average. Computations in~\cite{barbier2021overlap} often have $\E_\lambda$ but they are still valid without it.


With the above clarification, we are ready to describe how to extract the announced results.
Estimates in~\eqref{e.QandL(1)} and~\eqref{e.QandL(2)} correspond to (3.10) and (3.11) in~\cite[Theorem~3.1]{barbier2021overlap}. We can extract~\eqref{e.QandL(1)} from the first display below~\cite[(4.13)]{barbier2021overlap}. We absorb the factors $C(K,S)$ and $s_n$ therein into $C$ here. 
We can extract~\eqref{e.QandL(2)} from the first display below (4.19) in~\cite{barbier2021overlap} together with the ensuing sentences. The estimate in~\eqref{e.QandL(3)} is contained in the proof of~\cite[Theorem~3.2]{barbier2021overlap}. In the proof, whenever \cite[(3.10) and (3.11)]{barbier2021overlap} are used, we need to replace them by~\eqref{e.QandL(1)} and~\eqref{e.QandL(2)}. This way, we replace every instance of $\frac{C(K,S)}{(s_nn)^\frac{1}{4}}$ and $\mathcal{O}_{K,S}((s_nn)^{-\frac{1}{4}})$ in~\cite{barbier2021overlap} by $(\ell_0(N)+N^{-\frac{1}{2}})^\frac{1}{2}$.
Then,~\eqref{e.QandL(3)} follows from \cite[(4.23) and the next display]{barbier2021overlap}.
\end{proof}

The concentration of $\cL$ is achieved in~\cite{barbier2021overlap} by applying a local average of the additional field, which is a standard technique for spin glass models. Also, it is well-known in spin glass that differentiability implies concentration~\cite{chatterjee}. The next result is of the same flavor.

\begin{lemma}[Concentration of $\cL$]\label{l.concent_cL}
Assume~\ref{i.assume_1_support_X}, \ref{i.assume_2_F_N(0,0)_cvg}, and~\ref{i.assume_3_concent}.
If $f(t,\cdot)$ is differentiable at $(t,h)\in\R_+\times \S^\D_{++}$, then
\begin{align}\label{e.concent.cL}
    \lim_{N\to\infty} \E \la\Ll|\cL - \E \la\cL\ra_{N,t,h}\Rr|\ra_{N,t,h}=0.
\end{align}
\end{lemma}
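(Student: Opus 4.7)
The plan is to follow the classical \emph{convexity-plus-differentiability implies concentration} paradigm. Using the identity $\la\cL\ra_{N,t,h}=\nabla_h F_N(t,h)$ from \eqref{e.<L>=nabla_hF_N}, I would decompose
\begin{align*}
\cL - \E\la\cL\ra_{N,t,h} = \bigl(\cL - \nabla_h F_N(t,h)\bigr) + \bigl(\nabla_h F_N(t,h) - \nabla_h\bar F_N(t,h)\bigr)
\end{align*}
into a thermal fluctuation and a disorder fluctuation. The key external inputs are the convexity of $\bar F_N(t,\cdot)$ on $\S^\D_+$ (Lemma~2.3 of \cite{chen2022statistical}), the concentration hypothesis~\ref{i.assume_3_concent}, and the hypothesized differentiability of the convex limit $f(t,\cdot)$ at $h$. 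Rockafellar's Theorem~25.7 applied to $\bar F_N(t,\cdot)\to f(t,\cdot)$ at the differentiability point $h$ yields at once $\nabla_h\bar F_N(t,h)=\E\la\cL\ra_{N,t,h}\to\nabla_h f(t,h)$, and reduces the lemma to $L^1$-vanishing of the thermal and disorder parts separately.

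For the \emph{thermal} part, I would differentiate the gradient formula \eqref{e.derF_N=} once more along a direction $a\in\S^\D$ to produce the variance identity
\begin{align*}
\partial_\eps^2\bar F_N(t,h+\eps a)\big|_{\eps=0}
= N\,\E\la(a\cdot\cL - \la a\cdot\cL\ra)^2\ra_{N,t,h}
+ N^{-1}\sqrt{2}\,\cD^2_{\sqrt{h}}(a,a)\cdot\E\la\bx^\intercal Z\ra_{N,t,h},
\end{align*}
where $\cD^2_{\sqrt{h}}(a,a)=\partial_\eps\cD_{\sqrt{h+\eps a}}(a)|_{\eps=0}$ and the correction is $O(N^{-1/2})$ by~\ref{i.assume_1_support_X}, \eqref{e.|D_sqrt(h)(a)|<}, and the crude Gaussian bound $|\E\la\bx^\intercal Z\ra|\leq C\sqrt{N}$. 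Integrating in $\eps\in[-\delta,\delta]$ turns the left-hand side into $a\cdot\bigl[\nabla_h\bar F_N(t,h+\delta a) - \nabla_h\bar F_N(t,h-\delta a)\bigr] + O(\delta N^{-1/2})$, which tends to $0$ as $N\to\infty$ and then $\delta\to0$, using Rockafellar at $h\pm\delta a$ (for the full-measure set of $\delta$'s where $f$ is differentiable) together with the continuity of directional derivatives of a convex function at a differentiability point. For the \emph{disorder} part, convexity of $\bar F_N$ yields the brackets $\delta^{-1}[\bar F_N(t,h)-\bar F_N(t,h-\delta a)] \leq a\cdot\nabla_h\bar F_N(t,h) \leq \delta^{-1}[\bar F_N(t,h+\delta a)-\bar F_N(t,h)]$, while the smooth-but-not-convex random $F_N$ admits an analogous sandwich up to an $O(\delta)$-error uniform in $N$ — controlled by applying the thermal-variance identity to $F_N$ itself and using~\ref{i.assume_1_support_X} to bound $\cL$. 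The closeness of the $F_N$ brackets to the $\bar F_N$ brackets follows from~\ref{i.assume_3_concent} applied on a compact neighborhood of $h$, and sending $N\to\infty$ then $\delta\to 0$ combined with Step~1 finishes the disorder concentration.

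The main obstacle is the descent from the averaged-in-$h$ thermal bound obtained by integrating the variance identity to the pointwise statement at the single point $h$: the Hessian of a convex function at an individual point need not be bounded by its convergence to a differentiable pointwise limit, so one must genuinely exploit the interior assumption $h\in\S^\D_{++}$ and the uniform-in-$N$ smoothness of the Gibbs-averaged quantities there (with moduli of continuity controlled by $|h^{-1}|$ through \eqref{e.|D_sqrt(h)(a)|<}) to close the gap.
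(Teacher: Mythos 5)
Your decomposition into a thermal part $\cL - \la\cL\ra_{N,t,h}$ and a disorder part $\nabla_h F_N - \nabla_h\bar F_N$ is exactly the paper's structure, and your treatment of the disorder part (Taylor expansion with the lower Hessian bound from~\eqref{e.nabla^2F_N>}, replacing $F_N$ by $\bar F_N$ via~\ref{i.assume_3_concent}, and using the convexity of $\bar F_N$ with difference quotients) is essentially the paper's Step~2. Two issues remain, however, and the second is fatal to the thermal part as you have sketched it.

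First, a computational point: in the ``variance identity'' you write, the correction term $N^{-1}\sqrt{2}\,\cD^2_{\sqrt{h}}(a,a)\cdot\E\la\bx^\intercal Z\ra_{N,t,h}$ is \emph{not} $O(N^{-1/2})$. The matrix $\bx^\intercal Z$ has entries that are sums of $N$ products, and Gaussian integration by parts (since $Z$ enters $\bar Y$ and hence the posterior) shows $\E\la\bx^\intercal Z\ra$ is $O(N)$, not $O(\sqrt{N})$, making the correction $O(1)$ in $N$. This is consistent with the paper's estimate~\eqref{e.nabla^2barF_N>}, which carries a $-C|a|^2|h^{-1}|$ term that does not vanish as $N\to\infty$. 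It is still small after multiplying by the width $\delta$ of your integration interval, so it does not kill the idea, but the $O(N^{-1/2})$ claim is wrong.

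The more serious problem is the one you yourself flag at the end and then leave open: integrating the variance bound over $s\in[-\delta,\delta]$ gives you that the \emph{$s$-averaged} thermal variance is small, but the lemma requires smallness at the single point $s=0$, and no amount of appealing to ``uniform-in-$N$ smoothness'' of the Gibbs averages resolves this. The quantity $N\E\la(\cL_a-\la\cL_a\ra_{h+sa})^2\ra_{h+sa}$ has, a priori, an $s$-derivative of order $N$ (or worse), so Lipschitz continuity in $s$ cannot be used with an $N$-independent modulus to descend from the average to the pointwise value. The paper closes this gap with a specific trick you would need to find: writing the pointwise quantity $g(0):=\E\la|\cL_a^1-\cL_a^2|\ra_h$ as $\frac1r\int_0^r g(s)\,\d s - \frac1r\int_0^r\int_0^\tau g'(s)\,\d s\,\d\tau$, and observing that the explicit covariance formula for $g'(s)=\frac{\d}{\d s}\E\la|\cL_a^1-\cL_a^2|\ra_{h+sa}$ is bounded \emph{below} by $-8N\E\la(\cL_a-\la\cL_a\ra_{h+sa})^2\ra_{h+sa}$; both the averaged term (via Cauchy--Schwarz in $s$) and the $g'$ correction are then controlled by $\eps_N=N\int_0^r\E\la(\cL_a-\la\cL_a\ra_{h+sa})^2\ra_{h+sa}\,\d s$, which in turn is bounded by second differences of $\bar F_N$ using~\eqref{e.nabla^2barF_N>} and convexity, and vanishes after $N\to\infty$, $r\to0$, $\tau\to0$ via pointwise convergence and differentiability of $f$ at $h$. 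Without this one-sided $g'$ bound connecting the pointwise $L^1$ fluctuation back to the averaged variance, your thermal step does not close.
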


To prove this lemma, we need to recall the following estimates from~\cite{mourrat2020hamilton}. For every $h\in \S^\D_{++}$, recall that we denote by $h^{-1}$ its matrix inverse.
Also recall the noise matrix $Z$ from~\eqref{e.barY=Xsqrt(2h)+Z}. Henceforth, we write $\nabla = \nabla_h$ for brevity.

\begin{lemma}[\cite{mourrat2020hamilton}]\label{l.nabla^2F}
Assume~\ref{i.assume_1_support_X}.
There is a constant $C>0$ such that, for every $N\in\N$, $t\in\R_+$, $h\in\S^\D_{++}$ and $a\in \S^\D$,
\begin{gather}
    a\cdot\nabla\Ll(a\cdot\nabla  F_N(t,h)\Rr)\geq -CN^{-\frac{1}{2}}|a|^2|Z|\Ll|h^{-1}\Rr|^\frac{3}{2}\label{e.nabla^2F_N>},
    \\
    a\cdot\nabla\Ll(a\cdot\nabla \bar F_N(t,h)\Rr)\geq N\E \la \Ll(a\cdot \cL-\la a\cdot \cL\ra\Rr)^2 \ra_{N,t,h} - C|a|^2\Ll|h^{-1}\Rr|.\label{e.nabla^2barF_N>}
\end{gather}
\end{lemma}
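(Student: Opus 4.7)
The plan is to differentiate the free energy twice in $h$ via standard Gibbs calculus, isolate the nonnegative ``variance'' part from the ``curvature'' part arising from the nonlinearity of $h\mapsto\sqrt{h}$, and estimate the latter. First I would substitute $\bar Y = X\sqrt{2h}+Z$ into the $h$-dependent part of~\eqref{e.H_N(t,h,x)}. Trace cyclicity reduces the quadratic piece $\sqrt{2h}\cdot\bx^\intercal X\sqrt{2h}$ to the $h$-linear quantity $2h\cdot\bx^\intercal X$, so the only $h$-nonlinear term remaining in $H_N$ is $\sqrt{2h}\cdot\bx^\intercal Z$. Differentiating twice along $a$ therefore gives the clean identity $a\cdot\nabla(a\cdot\nabla H_N) = \sqrt{2}\,\cD^2_{\sqrt{h}}(a,a)\cdot\bx^\intercal Z$, where $\cD^2_{\sqrt{h}}(a,a)$ denotes the second directional derivative of the matrix square root. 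Standard Gibbs calculus then yields
\[
(a\cdot\nabla)^2 F_N = N\,\la(a\cdot\cL-\la a\cdot\cL\ra)^2\ra+\tfrac{\sqrt{2}}{N}\,\cD^2_{\sqrt{h}}(a,a)\cdot\la\bx\ra^\intercal Z,
\]
whose first term is nonnegative.

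For~\eqref{e.nabla^2F_N>} it suffices to bound the remainder pointwise. Combining the second-order analogue $|\cD^2_{\sqrt{h}}(a,a)|\le C|a|^2|h^{-1}|^{3/2}$ of~\eqref{e.|D_sqrt(h)(a)|<} (derivable from the same resolvent representation of the matrix square root) with the crude estimate $|\la\bx\ra|\le\sqrt{N\D}$ coming from~\ref{i.assume_1_support_X}, one obtains the announced $-CN^{-1/2}|a|^2|h^{-1}|^{3/2}|Z|$ lower bound.

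For~\eqref{e.nabla^2barF_N>} I would take $\E$ and handle $\E[\la\bx\ra^\intercal Z]$ by Gaussian integration by parts in $Z$. A direct computation using $\partial_{Z_{kb}}H_N=(\bx\sqrt{2h})_{kb}$ produces the identity
\[
\E[\la\bx\ra^\intercal Z] = \E[G]\,\sqrt{2h}, \qquad G:=\la\bx^\intercal\bx\ra-\la\bx\ra^\intercal\la\bx\ra,
\]
where $G$ is symmetric and positive semi-definite. The main obstacle is that the naive estimate combining $|\cD^2_{\sqrt{h}}(a,a)|\lesssim|h^{-1}|^{3/2}$ with $|\sqrt{2h}|\lesssim|h|^{1/2}$ only yields $|h^{-1}|^{3/2}|h|^{1/2}$, which is weaker than the $|h^{-1}|$ demanded by~\eqref{e.nabla^2barF_N>}. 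To absorb the extra $|h|^{1/2}$, I would use the algebraic identity
\[
\sqrt{h}\,\cD^2_{\sqrt{h}}(a,a)+\cD^2_{\sqrt{h}}(a,a)\,\sqrt{h} = -2\bigl(\cD_{\sqrt{h}}(a)\bigr)^2
\]
obtained by differentiating $(\sqrt{h})^2=h$ twice. Since both $\E[G]$ and $\cD^2_{\sqrt{h}}(a,a)$ are symmetric, trace cyclicity forces $\cD^2_{\sqrt{h}}(a,a)\cdot \E[G]\sqrt{h}=\cD^2_{\sqrt{h}}(a,a)\cdot \sqrt{h}\,\E[G]$, so taking the Frobenius inner product of the identity with $\E[G]$ collapses to
\[
2\,\cD^2_{\sqrt{h}}(a,a)\cdot\E[G]\sqrt{h} = -2\,\bigl(\cD_{\sqrt{h}}(a)\bigr)^2\cdot\E[G].
\]
Only the first-order bound $|\cD_{\sqrt{h}}(a)|\le C|a||h^{-1}|^{1/2}$ of~\eqref{e.|D_sqrt(h)(a)|<} now enters, and combined with $|\E G|\le C\D N$ from~\ref{i.assume_1_support_X} it delivers the sharp $C|a|^2|h^{-1}|$ estimate on the remainder, yielding~\eqref{e.nabla^2barF_N>}.
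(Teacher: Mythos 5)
Your proof is correct and supplies in full the computations that the paper simply delegates to Mourrat's earlier work: the decomposition of $(a\cdot\nabla)^2 F_N$ into the Gibbs variance term plus the residual $\frac{\sqrt2}{N}\cD^2_{\sqrt h}(a,a)\cdot\la\bx\ra^\intercal Z$ is exactly the content of the cited display~(3.22) of \cite{mourrat2020hamilton}, the pointwise bound on $\cD^2_{\sqrt h}(a,a)$ combined with $|\la\bx\ra|\le\sqrt{ND}$ reproduces~(3.37), and the Gaussian integration by parts followed by the identity $\sqrt h\,\cD^2_{\sqrt h}(a,a)+\cD^2_{\sqrt h}(a,a)\sqrt h=-2\bigl(\cD_{\sqrt h}(a)\bigr)^2$ (which shifts the estimate from the second-order bound $|h^{-1}|^{3/2}$ down to the sharper $|h^{-1}|$ via~(3.7)) is precisely the mechanism behind the bound for $\bar F_N$. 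So this is the same route as the source, written out; the trace-cyclicity manipulation $\tr(DGS)=\tr(DSG)$ for symmetric $D,G,S$ is valid (transpose and cycle), and all norm estimates are correct for the Frobenius norm.
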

\begin{proof}
Our $\bar F_N(t,h)$ and $Na\cdot\cL$ (see~\eqref{e.F_N=} and~\eqref{e.cL=}) correspond to $\bar F_N(t,2h)$ and $2H'_N(a,h,x)$ in~\cite{mourrat2020hamilton} (see~(1.6) and the display after~(3.18)). We remark that the interaction structure of signals considered in~\cite{mourrat2020hamilton} is quadratic, which is a special case of our setting (see the display above~(1.6)). However, the structure of external fields is the same. Since the computations to be recalled below are carried with respect to $h$, they still hold in our setting.

The estimate in~\eqref{e.nabla^2F_N>} is exactly the second bound in~\cite[(3.37)]{mourrat2020hamilton}.
The estimate in~\eqref{e.nabla^2barF_N>} follows from the combination of~\cite[(3.22) and (3.7)]{mourrat2020hamilton} together with the fact that entries of $\bx$ lie in $[-1,+1]$.
\end{proof}

\begin{proof}[Proof of Lemma~\ref{l.concent_cL}]
Fix $(t,h)$ as in the statement. Since $N$ is clear from the context, we simply write $\la\cdot\ra_{h'}=\la\cdot\ra_{N,t,h'}$ for any $h'\in \S^\D_+$. Also, we write $ F_N(h')= F_N(t,h')$ and $f(h')=f(t,h')$ for any $h'\in\S^\D_+$.

Since $\cL$ is a symmetric matrix, \eqref{e.concent.cL} is equivalent to 
\begin{align}\label{e.E<|g-E<g>|>=0}
    \lim_{N\to\infty} \E \la \Ll| \cL_a - \E \la \cL_a\ra_h\Rr|  \ra_h =0
\end{align}
for every $a\in\S^\D$, where we used the shorthand notation $\cL_a = a\cdot \cL$. 
In fact, we can take $a$ from the orthogonal basis of $\S^\D$ consisting of matrices with only $0$ or $1$ entries. In this way, \eqref{e.E<|g-E<g>|>=0} becomes the concentration of each entry of $\cL$.
Henceforth, we fix an arbitrary $a$ and verify~\eqref{e.E<|g-E<g>|>=0}. We proceed in two steps.

Step~1. We show
\begin{align}\label{e.E<|g-<g>|>=0}
    \lim_{N\to\infty}\E \la \Ll|\cL_a - \la \cL_a\ra_h\Rr|  \ra_h =0.
\end{align}
We denote by $(\bx^l)_{l\in\N}$ independent copies of $\bx$ under $\la \cdot\ra_h$ and let $\cL_a^l$ to be $\cL_a$ with $\bx$ therein (see~\eqref{e.a.L=}) replaced by $\bx^l$. Fix $r_0>0$ sufficiently small so that $h+sa\in \S^\D_{++}$ for all $s\in [-r_0,r_0]$.
For $r\in(0,r_0]$, integrating by parts, we have
\begin{align*}
    r\E \la \Ll|\cL_a^1 -\cL_a^2\Rr|\ra_h = \int_0^r \E \la \Ll|\cL_a^1 -\cL_a^2\Rr|\ra_{h+sa} \d s
    -\int_0^r \int_0^\tau \frac{\d}{\d s} \E \la \Ll|\cL_a^1 -\cL_a^2\Rr|\ra_{h+sa}\d s \d \tau.
\end{align*}
Using~\eqref{e.cL=} and~\eqref{e.<>=}, we can compute 
\begin{align*}
    \frac{\d}{\d s} \E \la \Ll|\cL_a^1 -\cL_a^2\Rr|\ra_{h+sa}  = N \E \la \Ll|\cL_a^1 -\cL_a^2\Rr|\Ll(\cL_a^1+\cL_a^2-2\cL_a^3\Rr)\ra_{h+sa}
    \\
    \geq -2N \E \la \Ll|\cL_a^1 -\cL_a^2\Rr|^2\ra_{h+sa}\geq -8N \E \la \Ll|\cL_a -\la \cL_a\ra_{h+sa}\Rr|^2\ra_{h+sa}.
\end{align*}
Combining the above two displays, we get
\begin{align}
    \E \la \Ll|\cL_a^1 -\cL_a^2\Rr|\ra_h &\leq \frac{1}{r}\int_0^r\E \la \Ll|\cL_a^1 -\cL_a^2\Rr|\ra_{h+sa} \d s 
    + \frac{8N}{r}\int_0^r \int_0^\tau \E \la \Ll|\cL_a -\la \cL_a\ra_{h+sa}\Rr|^2\ra_{h+sa}\d s \d \tau \notag
    \\
    &\leq \frac{2}{r}\int_0^r\E \la \Ll|\cL_a -\la \cL_a\ra_{h+sa}\Rr|\ra_{h+sa} \d s \label{e.app_CS}
    + 8N\int_0^r \E \la \Ll|\cL_a -\la \cL_a\ra_{h+sa}\Rr|^2\ra_{h+sa}\d s .
\end{align}
Setting
\begin{align*}
    \eps_N = N \int_0^r \E \la \Ll|\cL_a -\la \cL_a\ra_{h+sa}\Rr|^2\ra_{h+sa}\d s
\end{align*}
and applying the Cauchy--Schwarz inequality to the first integrand in~\eqref{e.app_CS},
we get
\begin{align}\label{e.E<g-g><2..eps..}
    \E \la \Ll|\cL_a^1 -\cL_a^2\Rr|\ra_h \leq 2 \sqrt{\frac{\eps_N}{rN}}+ 8 \eps_N.
\end{align}
By~\eqref{e.nabla^2barF_N>} from Lemma~\ref{l.nabla^2F}, there is a constant $C>0$ depending only on $h$, $a$, and $r_0$ such that
\begin{align*}
    N\E \la \Ll|\cL_a-\la \cL_a\ra_{h+sa}\Rr|^2\ra_{h+sa}\leq (a\cdot\nabla )\Ll(a\cdot\nabla\bar F_N(h+sa)\Rr)+ C
\end{align*}
for every $s\in [0,r_0]$.
Hence, we have
\begin{align*}
    \eps_N & \leq \int_0^r \frac{\d^2}{\d s^2}\bar F_N(h+sa) \d s+Cr =  a\cdot\nabla \bar F_N(h+ra) - a\cdot\nabla \bar F_N(h) +Cr
    \\
    &\leq \frac{\bar F_N(h+(r+\tau)a) - \bar F_N(h+ra)}{\tau} - \frac{\bar F_N(h) - \bar F_N(h-\tau a)}{\tau}+Cr
\end{align*}
for any $\tau\in(0,r_0]$, where the last inequality follows from the convexity of $\bar F_N$ (see~\cite[Lemma~2.3]{chen2022statistical}).
Therefore, the above display along with~\eqref{e.E<g-g><2..eps..} and the convergence of $\bar F_N$ to $f$ given by Theorem~\ref{t.cvgF_N_to_f} implies
\begin{align*}
    \limsup_{N\to\infty} \frac{1}{8}\E \la \Ll|\cL_a^1 -\cL_a^2\Rr|\ra_h \leq \frac{ f(h+(r+\tau)a) -  f(h+ra)}{\tau} - \frac{ f(h) -  f(h-\tau a)}{\tau} + Cr
\end{align*}
We first send $r\to0$ and then $\tau\to 0$. By the differentiability assumption on $f$, the right-hand side becomes zero. This immediately yields~\eqref{e.E<|g-<g>|>=0}. 

Step~2. We show
\begin{align}\label{e.E|<g>-E<g>|=0}
    \lim_{N\to\infty}\E\Ll| \la \cL_a\ra_h - \E\la \cL_a\ra_h\Rr|  =0.
\end{align}
We start with~\eqref{e.<L>=nabla_hF_N} which gives
\begin{align}\label{e.1/NE|g-g|=...}
     \E\Ll| \la \cL_a\ra_h - \E\la \cL_a\ra_h\Rr| = \E \Ll| a\cdot\nabla F_N(h) - a\cdot\nabla \bar F_N(h)\Rr|.
\end{align}
Recall that $r_0>0$ is fixed so that $h+sa\in \S^\D_{++}$ for every $s\in [-r_0,r_0]$. 
For $r\in(0,r_0]$, we set
\begin{align*}
    \delta_N(r) = \Ll|F_N(h-ra)-\bar F_N(h-ra)\Rr| + \Ll|F_N(h)-\bar F_N(h)\Rr| + \Ll|F_N(h+ra)-\bar F_N(h+ra)\Rr|.
\end{align*}
By~\eqref{e.nabla^2F_N>} from Lemma~\ref{l.nabla^2F} and the Taylor expansion, there is a constant $C$ depending only on $h$, $a$, and $r_0$ such that, for every $r\in(0,r_0]$,
\begin{align*}
    F_N(h+ra)-F_N(h) \geq ra\cdot\nabla F_N(h) -Cr^2N^{-\frac{1}{2}}|Z|,
    \\
    F_N(h-ra)-F_N(h) \leq ra\cdot\nabla F_N(h) +Cr^2N^{-\frac{1}{2}}|Z|.
\end{align*}
The above two displays together give
\begin{align*}
    a\cdot\nabla F_N(h) - a\cdot\nabla \bar F_N(h) \leq \frac{\bar F_N(h+ra)-\bar F_N(h)}{r}-a\cdot \nabla \bar F_N(h)+ \frac{\delta_N(r)}{r} + CrN^{-\frac{1}{2}}|Z|,
    \\
    a\cdot\nabla F_N(h) - a\cdot\nabla \bar F_N(h) \geq \frac{\bar F_N(h)-\bar F_N(h-ra)}{r}-a\cdot \nabla \bar F_N(h)- \frac{\delta_N(r)}{r}-CrN^{-\frac{1}{2}}|Z|.
\end{align*}
The concentration assumption~\ref{i.assume_3_concent} ensures $\lim_{N\to\infty}\E \delta_N(r)=0$.
Recall from~\eqref{e.barY=Xsqrt(2h)+Z} that $Z$ is an $N\times \D$ matrix with i.i.d.\ standard Gaussian entries. Therefore, we have $\E |Z|\leq \sqrt{N\D}$. Also, $\bar F_N$ converges to $f$ pointwise as given by Theorem~\ref{t.cvgF_N_to_f}.
Using these and~\eqref{e.limdhF_N}, we get
\begin{align*}
    \limsup_{N\to\infty} \E \Ll|a\cdot\nabla F_N(h) - a\cdot\nabla \bar F_N(h)\Rr| 
    &\leq \Ll|\frac{ f(h+ra)- f(h)}{r}-a\cdot \nabla  f(h)\Rr| 
    \\
    &+ \Ll|\frac{ f(h)- f(h-ra)}{r}-a\cdot \nabla  f(h)\Rr| + Cr\sqrt{\D}.
\end{align*}
Now, sending $r\to0$ and using the differentiability of $f$ at $h$, we deduce that the left-hand side in the above vanishes. Finally, inserting this to \eqref{e.1/NE|g-g|=...}, we arrive at~\eqref{e.E|<g>-E<g>|=0}.

Since~\eqref{e.E<|g-<g>|>=0} and~\eqref{e.E|<g>-E<g>|=0} together yield~\eqref{e.E<|g-E<g>|>=0}, the proof is complete.
\end{proof}

Now, we are ready for the main task.

\begin{proof}[Proof of Proposition~\ref{p.overlap}]
Recall $Q$ from~\eqref{e.overlap=}.
Fix any $(t,h)\in \R_+\times \S^\D_{++}$ such that $f(t,\cdot)$ is differentiable at $h$. 
We verify~\eqref{e.limE<O_N>=} and~\eqref{e.limE<|O-E<O>|>=0}. First, \eqref{e.limE<O_N>=} follows from~\eqref{e.limdhF_N}, \eqref{e.derF_N=}, \eqref{e.Nishimori}. Then, we turn to~\eqref{e.limE<|O-E<O>|>=0}.
Recall $\ell_0(N)$ and $\ell_1(N)$ from~\eqref{e.ell(N)=}. It is straightforward to see $\ell_0(N)\leq 2\ell_1(N)$. Hence, \eqref{e.limE<|O-E<O>|>=0} follows from~\eqref{e.QandL(3)} in Lemma~\ref{l.QandL} provided $\lim_{N\to\infty}\ell_1(N)=0$. In the following, we deduce this from Lemma~\ref{l.concent_cL}.
Using~\eqref{e.a.L=}, \eqref{e.|D_sqrt(h)(a)|<}, and assumption~\ref{i.assume_1_support_X}, there is a constant depending only on $h$ and $\D$ such that
\begin{align*}
    \Ll|a \cdot \cL\Rr|\leq C\Ll(N^{-\frac{1}{2}}|Z|+1\Rr)|a|
\end{align*}
which immediately implies
\begin{align*}
    \Ll|\cL\Rr|\leq C\Ll(N^{-\frac{1}{2}}|Z|+1\Rr).
\end{align*}
We write $\la\cdot\ra=\la\cdot\ra_{N,t,h}$ for simplicity.
Using this, we have that, for any $r>0$,
\begin{align*}
    \E \la\Ll|\cL-\E\la \cL\ra\Rr|^2\ra  
    = \E\Ll[ \la\Ll|\cL-\E\la \cL\ra\Rr|^2\ra\mathbf{1}_{N^{-\frac{1}{2}}|Z|\leq r}\Rr] + \E\Ll[ \la\Ll|\cL-\E\la \cL\ra\Rr|^2\ra\mathbf{1}_{N^{-\frac{1}{2}}|Z|> r}\Rr]
    \\
    \leq 2C(r+1)\E \la \Ll|\cL-\E\la \cL\ra\Rr|\ra + 8C^2\E\Ll[\Ll(N^{-1}|Z|^2+1\Rr)\mathbf{1}_{N^{-\frac{1}{2}}|Z|> r}\Rr]
\end{align*}
Since $Z$ is an $N\times \D$ matrix with i.i.d.\ standard Gaussian entries (see~\eqref{e.barY=Xsqrt(2h)+Z}), it is standard (e.g.\ \cite[Theorem~3.1.1]{vershynin2018high}) to see that for every $\eps>0$, there is $r>0$ such that the last term is bounded by $\eps$. Then, applying Lemma~\ref{l.concent_cL}, we get
\begin{align*}
    \limsup_{N\to\infty} \E \la\Ll|\cL-\E\la \cL\ra\Rr|^2\ra \leq \eps.
\end{align*}
Sending $\eps\to0$, we can thus deduce $\lim_{N\to\infty} \ell_1(N)=0$ and \eqref{e.limE<|O-E<O>|>=0}.
\end{proof}

\section{Short-time regularity}

Recall that $\psi$ is the initial condition given in~\ref{i.assume_2_F_N(0,0)_cvg}. In this section, we assume that
\begin{align}\label{e.psi_C_2}
    \text{$\psi$ is twice differentiable with bounded derivatives (both first and second order),}
\end{align}
as assumed in the last part of Theorem~\ref{t}. First, we show that this condition is satisfied under the stronger assumption~\ref{i.assume_S}. Note that under the condition in~\eqref{e.psi_C_2} the gradient of $\psi$ is a Lipschitz function. In this section, again, we write $\nabla =\nabla_h$ for brevity.

\begin{lemma}\label{l.HS_condition}
Under~\ref{i.assume_S}, the condition in ~\eqref{e.psi_C_2} holds.
\end{lemma}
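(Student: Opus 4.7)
Under~\ref{i.assume_S}, rows of $X$ are i.i.d., so $\bar F_N(0,\cdot)=\bar F_1(0,\cdot)$ for every $N$ and hence $\psi=\bar F_1(0,\cdot)$. The first-derivative formula~\eqref{e.derF_N=} specialized to $N=1$, $t=0$, combined with the Nishimori identity~\eqref{e.Nishimori}, gives $\nabla\psi(h) = \E[\langle\bx\rangle^{\!\intercal}\langle\bx\rangle]$, where $\langle\cdot\rangle=\langle\cdot\rangle_{1,0,h}$. The essential bound $|\bx|_\infty\le 1$ supplied by~\ref{i.assume_S} implies $|\nabla\psi(h)|\le D$ uniformly in $h\in\S^\D_+$. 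Boundedness of the second derivative is the real content.

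Fix $h\in\S^\D_{++}$ and $a,b\in\S^\D$. Using the cyclic trace to rewrite $\sqrt{2h}\cdot\bx^{\!\intercal}X_1\sqrt{2h}=2h\cdot\bx^{\!\intercal}X_1$, the Hamiltonian becomes $H_1(0,h,\bx)=2h\cdot\bx^{\!\intercal}X_1+\sqrt{2h}\cdot\bx^{\!\intercal}Z-h\cdot\bx^{\!\intercal}\bx$, so $\partial_aH_1 = 2a\cdot\bx^{\!\intercal}X_1 + \sqrt{2}\,\cD_{\sqrt{h}}(a)\cdot\bx^{\!\intercal}Z - a\cdot\bx^{\!\intercal}\bx$. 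Standard log-partition differentiation then gives
\[
\partial_b\partial_a\bar F_1(0,h) \;=\; \E\langle\sqrt{2}\,\cD^2_{\sqrt{h}}(a,b)\cdot\bx^{\!\intercal}Z\rangle \;+\; \E\,\mathrm{Cov}_{\langle\cdot\rangle}(\partial_aH_1,\partial_bH_1).
\]
Both terms are naïvely unbounded as $h\to 0$: by~\eqref{e.|D_sqrt(h)(a)|<} and its analogue, $|\cD_{\sqrt{h}}|\lesssim|h^{-1}|^{1/2}$ and $|\cD^2_{\sqrt{h}}|\lesssim|h^{-1}|^{3/2}$.

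The crux is that these singularities cancel after Gaussian integration by parts in $Z$. One uses $\partial_{Z_j}H_1=(\bx\sqrt{2h})_j$ together with the square-root identities $\cD_{\sqrt{h}}(a)\sqrt{h}+\sqrt{h}\cD_{\sqrt{h}}(a)=a$ and $\cD^2_{\sqrt{h}}(a,b)\sqrt{h}+\sqrt{h}\cD^2_{\sqrt{h}}(a,b)=-\cD_{\sqrt{h}}(a)\cD_{\sqrt{h}}(b)-\cD_{\sqrt{h}}(b)\cD_{\sqrt{h}}(a)$ (obtained by differentiating $\sqrt{h}\sqrt{h}=h$ once and twice). Each occurrence of $\cD_{\sqrt{h}}(\cdot)\cdot\bx^{\!\intercal}Z$ inside an $\E$, after one IBP, becomes a connected Gibbs cumulant of $\bx$ contracted with $\cD_{\sqrt{h}}(\cdot)\sqrt{2h}$; by symmetry of the cumulant in the free indices, only the symmetric part $\tfrac12\bigl(\cD_{\sqrt{h}}(a)\sqrt{2h}+\sqrt{2h}\cD_{\sqrt{h}}(a)\bigr)=a/\sqrt{2}$ contributes. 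The same mechanism applied to the $\cD^2_{\sqrt{h}}(a,b)$-term converts its singular part into a contribution proportional to $\cD_{\sqrt{h}}(a)\cD_{\sqrt{h}}(b)+\cD_{\sqrt{h}}(b)\cD_{\sqrt{h}}(a)$, which is exactly cancelled by the matching $\cD_{\sqrt{h}}(a)\otimes\cD_{\sqrt{h}}(b)$ cross-term in $\mathrm{Cov}_{\langle\cdot\rangle}(\partial_aH_1,\partial_bH_1)$ after a second IBP. What remains is a finite sum of posterior connected cumulants of $\bx$ of order at most four, contracted with $a$, $b$ and bounded functions of $X_1$; all entries being in $[-1,1]$ by~\ref{i.assume_S}, this is bounded by a constant depending only on $\D$.

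This yields $|\partial_a\partial_b\bar F_1(0,h)|\le C|a||b|$ uniformly in $h\in\S^\D_{++}$; together with continuity across $\S^\D_+$ this verifies~\eqref{e.psi_C_2}. The principal obstacle is the combinatorial bookkeeping of the cancellations: one must track each Gaussian IBP, verify that every $\cD_{\sqrt{h}}$- or $\cD^2_{\sqrt{h}}$-factor is paired with a $\sqrt{2h}$-factor produced by $\partial_Z H_1$, and invoke the appropriate form of the square-root identity to annihilate the singular parts.
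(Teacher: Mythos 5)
Your argument is correct in spirit and relies on the same two ingredients the paper does — Gaussian integration by parts in $Z$ and the algebraic identities for $\cD_{\sqrt h}$ — but the order in which you apply them makes your route markedly heavier than the paper's. You compute $\partial_b\partial_a\bar F_1(0,h)$ directly via the second-derivative-of-log-partition formula, which produces the term $\E\la\sqrt 2\,\cD^2_{\sqrt h}(a,b)\cdot\bx^\intercal Z\ra$ (singular of order $|h^{-1}|^{3/2}$ before IBP, of order $|h^{-1}|$ after one IBP) and a $\cD_{\sqrt h}(a)\otimes\cD_{\sqrt h}(b)$ cross-term in the covariance, and you must then show that these singular pieces cancel against one another. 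The paper instead first applies IBP at the level of the \emph{first} derivative, producing the manifestly bounded and $Z$-free formula $(a\cdot\nabla)\psi(h)=\E\la a\cdot\bx^\intercal X\ra$, and only then differentiates in $b$. Since the only $h$-dependence in that expression is through the Gibbs weight, the $b$-differentiation brings down a single centered factor $(b\cdot\nabla)H_1(\bx)-(b\cdot\nabla)H_1(\bx')$, which carries only a first-order $\cD_{\sqrt h}(b)$; one more IBP pairs it with a single $\sqrt{2h}$ and the symmetrization identity $\cD_{\sqrt h}(b)\sqrt h+\sqrt h\,\cD_{\sqrt h}(b)=b$ removes the singularity outright, with no $\cD^2_{\sqrt h}$ term and no cancellation between separate singular contributions to track. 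So what the paper buys by reordering "IBP, then differentiate" is that the worst terms never arise. Your plan is plausible and the final structure (a polynomial of degree at most four in $\bx,\bx',\bx'',X$ with $[-1,1]$ entries, contracted with $a,b$) matches the paper's, but the cancellation step you flag as "the principal obstacle" is precisely the work the paper avoids; in its present form your proposal is a viable programme rather than a complete verification, because you assert rather than check that the $\cD^2_{\sqrt h}$ contribution and the $\cD_{\sqrt h}(a)\cdot\cD_{\sqrt h}(b)$ contribution annihilate each other after the second IBP.
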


\begin{proof} 
Under~\ref{i.assume_S}, we have $\psi = \bar F_1(0,\cdot)$. As previously, we write $\langle \cdot \rangle$ in place of $\langle \cdot \rangle_{N=1,t=0,h}$. We let $\bx'$ and $\bx''$ denote independent copies of $\bx$ under $\langle \cdot \rangle$. Then, we can directly compute the derivatives of $\bar F_1(0,\cdot)$ using Gaussian integration by parts (as proven in \cite[(3.8) and (3.10)]{mourrat2020hamilton}) and the Nishimori identity~\eqref{e.Nishimori} (also \cite[(3.3)]{mourrat2020hamilton}). More precisely we will use the following identities, which hold for any $\R$-valued bounded measurable function $g$ and any $\R^{N \times D}$-valued bounded measurable function $G$,
\begin{align*} 
    \E \la g(x,x') \ra &= \E \la g(x,X) \ra, \\
    \E \la Z \cdot G(\bx,X) \ra &= \E \la (\bx-\bx'')\sqrt{2h} \cdot G(\bx,X)  \ra, \\
    \E \la Z \cdot G(\bx,\bx',X) \ra &= \E \la (\bx+\bx'-2\bx'')\sqrt{2h} \cdot G(\bx,\bx',X)  \ra. 
\end{align*}
For $a \in S^D$, we denote by $a \cdot \nabla$ the operator defined by taking the directional derivative in direction $a$ with respect to $h$, that is
\begin{equation*}
    (a \cdot \nabla) g(h) = \lim_{\varepsilon \to 0} \frac{g(h+\varepsilon a) - g(h)}{\varepsilon}.
\end{equation*}
Recall from~\eqref{e.D_sqrt(h)(a)} that $\mcl D_{\sqrt{h}}(a)$ denotes the derivative in the direction $a$ of the square root function at $h$.
Then, we have
\begin{equation*}
    (a \cdot \nabla) H_1(0,h,\bx) = \sqrt{2} \mcl D_{\sqrt{h}}(a) \cdot \bx^\intercal Z + 2a \cdot \bx^\intercal X - a \cdot \bx^\intercal \bx
\end{equation*}
Differentiating $\psi$ in direction $a$ we obtain
\begin{equation*}
    (a \cdot \nabla) \psi(h) = \E \langle(a \cdot \nabla) H_1(0,h,\bx) \rangle.
\end{equation*}
Using the first Gaussian integration by part identity as recalled above, we recover \cite[(2.2)]{chen2022statistical}, namely
\begin{equation*}
    (a \cdot \nabla) \psi(h) = \E \langle a \cdot \bx^\intercal X \rangle.
\end{equation*}
The boundedness of the signal yields that $\left|(a \cdot \nabla) \psi(h) \right| \leq C|a|$ and thus the first derivative of $\psi$ is bounded. Furthermore, differentiating the expression in the previous display in direction $b \in S^D$ yields
\begin{equation*}
   (b \cdot \nabla) (a \cdot \nabla) \psi(h) = \E \la a \cdot \bx^\intercal X \left( (b \cdot \nabla) H_1(0,h,\bx) - (b \cdot \nabla) H_1(0,h,\bx') \right)\ra.
\end{equation*}
Again using the Gaussian integration by part identities we obtain, like in \cite[(3.27)]{mourrat2020hamilton}, an expression of the form 
\begin{equation*}
   (b \cdot \nabla) (a \cdot \nabla) \psi(h) = \E \langle \left( a \cdot \bx^\intercal X \right) b \cdot P(\bx,\bx',\bx'',X) \rangle,
\end{equation*}
where $P$ is some explicit homogeneous polynomial of degree 2. Again, from the boundedness of the signals, we see that 
\begin{equation*}
    |(b \cdot \nabla) (a \cdot \nabla) \psi(h)| \leq C|a||b|.
\end{equation*}
This proves that the second derivative of $\psi$ is bounded.
\end{proof}
We consider the nonlinearity $\H$ as in~\eqref{e.H(q)=(AA)...}. For each $h\in\S^\D_+$, the characteristic line $t\mapsto X(t,h)$ emitting from $h$ is defined as 
\begin{align*}
    X(t,h) = h - t\nabla\H(\nabla\psi(h)),\quad\forall t\in \R_+.
\end{align*}
Under~\eqref{e.psi_C_2}, the goal is to show that the solution of~\eqref{e.hj_stats} is twice differentiable for a short time. For equations defined on the entire Euclidean space, this is a standard result \cite[Exercise~2.10]{HJbook}. But in our case, since the domain is the convex cone $\S^\D_+$, this standard argument does not apply directly. We need to ensure that the characteristics in the backward direction do not leave $\S^\D_+$.
Fortunately, we have
\begin{align*}
    \nabla \H(a) \in \S^\D_+,\quad\forall a\in\S^\D_+
\end{align*}
which is a result of the monotonicity of $\H$ proved in~\cite[Lemma~4.2]{chen2022statistical}. Also, $\nabla\psi$ takes value in $\S^\D_+$ since $\bar F_N(0,\cdot)$ is increasing in the direction of $\S^\D_+$ as $\nabla\bar F_N(0,h)\in\S^\D_+$ (see~\cite[Lemma~2.1]{chen2022statistical}) and $\psi$ is the limit of $\bar F_N(0,\cdot)$. Hence, we always have $\nabla\H(\nabla\psi(h))\in\S^\D_+$, which means that the characteristics go further into the cone $\S^\D_+$ when traced backward.

Recall $L$ from~\eqref{e.L=}.
We first show that when $t<L^{-1}$ the characteristics form a diffeomorphism.

\begin{lemma}\label{l.X(t,.)properties}
Assume~\eqref{e.psi_C_2}. Let $t\in [0, L^{-1})$, then the following holds.
\begin{enumerate}
    \item\label{i.l.X(t,.)properties_1} The map $X(t,\cdot):\S^\D_+\to X(t,\S^\D_+)$ admits a differentiable inverse function $Z(t,\cdot)$.
    \item\label{i.l.X(t,.)properties_2} We have $X(t,\S^\D_+)\supset \S^\D_+$.
\end{enumerate}
\end{lemma}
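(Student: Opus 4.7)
The idea is that for $t<L^{-1}$ the nonlinear perturbation $h\mapsto t\,\nabla\H(\nabla\psi(h))$ in $X(t,h)=h-t\,\nabla\H(\nabla\psi(h))$ is a strict contraction, so $X(t,\cdot)$ is a small Lipschitz perturbation of the identity and everything reduces to the Banach fixed point theorem together with the key geometric observation, highlighted just before the lemma, that $\nabla\H(\nabla\psi(h))\in\S^\D_+$ for every $h\in\S^\D_+$.

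I would prove part~\eqref{i.l.X(t,.)properties_2} first, as it is the cleanest contraction argument. Fix $h^*\in\S^\D_+$ and consider $T:\S^\D_+\to\S^\D$ defined by $T(h)=h^*+t\,\nabla\H(\nabla\psi(h))$; clearly $X(t,h)=h^*$ is equivalent to $T(h)=h$. Two facts must be checked. First, $T(\S^\D_+)\subset\S^\D_+$: this uses $\nabla\psi(h)\in\S^\D_+$ (already noted, via $\nabla\bar F_N(0,h)\in\S^\D_+$ and passage to the limit) and $\nabla\H(a)\in\S^\D_+$ for $a\in\S^\D_+$ (Lemma~4.2 in~\cite{chen2022statistical}), together with the fact that $\S^\D_+$ is a convex cone and therefore closed under $h^*+t\,(\cdot)$. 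Second, $T$ is a $tL$-contraction by the very definition of $L$ in~\eqref{e.L=} and the assumption $t<L^{-1}$. Since $\S^\D_+$ is closed in the finite-dimensional space $\S^\D$, it is complete, and Banach's theorem yields a unique $h\in\S^\D_+$ with $T(h)=h$, proving $X(t,\S^\D_+)\supset\S^\D_+$.

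For part~\eqref{i.l.X(t,.)properties_1}, injectivity of $X(t,\cdot)$ comes from the same Lipschitz bound: if $X(t,h_1)=X(t,h_2)$ then $|h_1-h_2|=t\,|\nabla\H(\nabla\psi(h_1))-\nabla\H(\nabla\psi(h_2))|\leq tL\,|h_1-h_2|$, forcing $h_1=h_2$ since $tL<1$. The reverse-triangle form of the same inequality gives $|X(t,h_1)-X(t,h_2)|\geq (1-tL)\,|h_1-h_2|$, so the inverse $Z(t,\cdot)$ exists and is Lipschitz on $X(t,\S^\D_+)$. To upgrade to differentiability, note that assumption~\eqref{e.psi_C_2} makes $\nabla\psi$ differentiable with bounded derivative, and $\H$ is a polynomial, so $X(t,\cdot)$ is $C^1$ with Jacobian $\mathrm{Id}-t\,D(\nabla\H\circ\nabla\psi)(h)$; the operator norm of the second term is at most $tL<1$, so the Jacobian is invertible by a Neumann series and the inverse function theorem gives differentiability of $Z(t,\cdot)$.

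The only subtlety, and what I expect to be the one nontrivial technical step, is that $\S^\D_+$ has a boundary, whereas the inverse function theorem is usually invoked on open sets. I would handle this by extending $\nabla\H\circ\nabla\psi$, which is globally $L$-Lipschitz on $\S^\D_+$, to an $L$-Lipschitz map on all of $\S^\D$ (for instance via Kirszbraun, applied componentwise after identifying $\S^\D$ with a Euclidean space). The same contraction argument then runs on the open set $\S^\D$, the inverse function theorem applies at every point, and restricting to $\S^\D_+$ recovers both claims; in particular differentiability of $Z(t,\cdot)$ is inherited at points of $X(t,\S^\D_+)$, including those lying on the relative boundary.
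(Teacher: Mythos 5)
Your proposal is correct in substance but takes a genuinely different route from the paper, and it is worth comparing the two.

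For Part~\eqref{i.l.X(t,.)properties_2} the paper fixes $k\in\S^\D_+$, forms the closed convex hull $\mathfrak C_{t,k}$ of $\{k+t\nabla\H(\nabla\psi(h)):h\in\S^\D_+\}$, observes that boundedness of $\nabla\psi$ makes $\mathfrak C_{t,k}$ compact and convex, and then invokes \emph{Brouwer's} fixed point theorem for the self-map $h\mapsto k+t\nabla\H(\nabla\psi(h))$. You instead note that this self-map of the complete set $\S^\D_+$ is a $tL$-contraction by the very definition of $L$ in~\eqref{e.L=}, and apply \emph{Banach's} fixed point theorem. Your route is more elementary, and as a bonus gives uniqueness of the preimage; the paper's route needs only boundedness of $\nabla\psi$, not the Lipschitz estimate. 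Both rely on the same key geometric input, $\nabla\H(\nabla\psi(h))\in\S^\D_+$ and the cone structure, to ensure the map stays inside $\S^\D_+$. For Part~\eqref{i.l.X(t,.)properties_1} the paper argues via $\det\nabla X(t,\cdot)>0$ together with a global inverse function theorem of Hadamard type (citing Ambrosetti--Prodi); you instead get injectivity and the lower bi-Lipschitz bound $|X(t,h_1)-X(t,h_2)|\geq(1-tL)|h_1-h_2|$ directly from the contraction estimate, which is more self-contained.

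One step in your argument is imprecise. A Kirszbraun extension of $\nabla\H\circ\nabla\psi$ is only Lipschitz, not differentiable, so you cannot literally invoke the inverse function theorem on the extended map ``at every point'' of $\S^\D$. What actually closes the gap is a pointwise version: since $X(t,\cdot)$ is a bi-Lipschitz homeomorphism onto its image and is differentiable at each $h_0\in\S^\D_+$ (one-sidedly on the boundary, using~\eqref{e.psi_C_2}) with $\nabla X(t,h_0)=\mathrm{Id}-t\,D(\nabla\H\circ\nabla\psi)(h_0)$ invertible because its perturbing part has operator norm at most $tL<1$, the standard lemma that a Lipschitz local inverse of a map that is differentiable at a point with invertible derivative is itself differentiable at the image point yields differentiability of $Z(t,\cdot)$. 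The extension is still useful (it puts the fixed-point and bi-Lipschitz arguments on an open set), but the inverse-function-theorem phrasing should be replaced by this pointwise statement. The paper's own proof is equally terse on this last verification, so this is a shared technical point rather than a defect unique to your approach.
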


By an inverse function, we mean $X(t,Z(t,h))=h$ for every $h\in\S^\D_+$ here.

\begin{proof}
Due to the definition of $L$ and $t<L^{-1}$, we can see that $\det(\nabla X(t,h))>0$ for every $h\in\S^\D_+$. Therefore, in each local neighborhood, $X(t,\cdot)$ is invertible. The definition of $L$ and $t<L^{-1}$ also ensure that the preimage of any bounded set under $X(t,\cdot)$ is still bounded. Therefore, we can resort to classical results on the invertibility of maps (e.g.\ \cite[Theorem~1.8 in Chapter~3]{ambrosetti1995primer}) to get the existence of a continuous map $Z(t,\cdot)$ that serves as the inverse of $X(t,\cdot)$. Again using $t<L^{-1}$, we can first verify that $Z(t,\cdot)$ is Lipschitz. Then, using the smoothness of $X(t,\cdot)$ and the invertibility condition, we can verify that $Z(t,\cdot)$ is differentiable everywhere. This verifies Part~\eqref{i.l.X(t,.)properties_1}.

We show that for each $k\in\S^\D_+$ there is $h$ such that  $X(t,h)=k$.
Let $\mfk C_{t,k} \subset \S^\D_+$ denote the closed convex envelope of $\Ll\{k + t\nabla\H(\nabla\psi(h)) \big| \, h \in\S^\D_+\Rr\}$ and let 
\begin{equation*}
    \Phi_{t,k} : \begin{cases}
                \mfk C_{t,k} \longrightarrow \mfk C_{t,k} \\
                h \longmapsto k + t\nabla\H(\nabla\psi(h))
              \end{cases}.
\end{equation*}
Since $\Phi_{t,k}$ is continuous and $\mfk C_{t,k}$ is a compact convex set (as $\nabla\psi$ is bounded), it follows from Brouwer's fixed point theorem that $\Phi_{t,k}$ admits a fixed point $h^* \in \mfk C_{t,k}$ which satisfies $k = h^* - t\nabla\H(\nabla\psi(h^*)) = X(t,h^*)$. Hence, Part~\eqref{i.l.X(t,.)properties_2} is valid.
\end{proof}

Lemma~\ref{l.X(t,.)properties} allows us to consider the restriction of $Z(t,\cdot)$ to $\S^\D_+$. Then, we can proceed via the standard argument of characteristics to show the existence of a smooth solution for a short time.

\begin{proposition}\label{p.C^2}
Assume~\eqref{e.psi_C_2}.
Let $L$ be given as in~\eqref{e.L=} and let $f$ be the unique Lipschitz viscosity solution of~\eqref{e.hj_stats} with initial condition $f(0,\cdot)=\psi$. Then, the restriction of $f$ to $[0,L^{-1})\times \S^\D_+$ is twice differentiable everywhere.
\end{proposition}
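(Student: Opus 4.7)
The plan is to implement the classical method of characteristics on $[0,L^{-1})\times \S^\D_+$ to build a twice differentiable classical solution of~\eqref{e.hj_stats}, and then identify it with $f$ by the uniqueness statement in Theorem~\ref{t.cvgF_N_to_f}. Concretely, for $(t,h)\in[0,L^{-1})\times \S^\D_+$, I will set $h_0=Z(t,h)$ (well-defined in $\S^\D_+$ by Lemma~\ref{l.X(t,.)properties}) and $p_0=\nabla\psi(h_0)$, and consider the candidate
\begin{align*}
    \tilde f(t,h) = \psi(h_0) + t\Bigl[\H(p_0) - p_0\cdot \nabla\H(p_0)\Bigr].
\end{align*}
This is exactly the value one obtains by integrating the characteristic ODEs associated with $\partial_t u - \H(\nabla u)=0$: since $\H$ does not depend on $h$, the momentum is conserved along each characteristic, so $\nabla\psi$ evaluated at $h_0$ serves as the gradient at $(t,X(t,h_0))=(t,h)$.

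The first step is to verify the gradient identity $\nabla_h \tilde f(t,h)=\nabla\psi(Z(t,h))$. Differentiating the formula and using $\nabla X(t,h_0)=I-t\,\nabla^2\H(p_0)\nabla^2\psi(h_0)$ together with the inverse-function relation $\nabla_h Z(t,h)=(\nabla X(t,h_0))^{-1}$ (which comes from Lemma~\ref{l.X(t,.)properties}), a direct chain rule computation shows that the $t$-dependent corrections telescope to exactly recover $p_0$. The second step is to differentiate in $t$ using a parallel computation (via $\partial_t Z$) and check that $\partial_t \tilde f(t,h)=\H(\nabla_h \tilde f(t,h))$, so that $\tilde f$ is a classical solution with $\tilde f(0,\cdot)=\psi$. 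Boundedness of $\nabla\psi$ then yields that $\tilde f$ is Lipschitz on $[0,L^{-1})\times\S^\D_+$.

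The twice differentiability is then read off from the identity $\nabla_h \tilde f=\nabla\psi\circ Z$: since $\nabla\psi$ is differentiable everywhere (its Jacobian $\nabla^2\psi$ is bounded by hypothesis~\eqref{e.psi_C_2}) and $Z(t,\cdot)$ is differentiable by Lemma~\ref{l.X(t,.)properties}, the composition is differentiable everywhere in $h$, and a similar argument in $t$ gives joint differentiability of the gradient. Finally, since $\tilde f$ is differentiable it is in particular a Lipschitz viscosity solution on $[0,L^{-1})\times\S^\D_+$ with initial condition $\psi$; by the uniqueness half of Theorem~\ref{t.cvgF_N_to_f} (applied on this time interval, with a standard localization argument), $f=\tilde f$ on $[0,L^{-1})\times \S^\D_+$, and the twice differentiability of $f$ follows.

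The main obstacle I anticipate is not the chain-rule bookkeeping but the boundary issue noted in the introduction: on a general Euclidean domain the above is routine, but here $\S^\D_+$ has a nontrivial boundary, so one must be sure that the backward characteristics used to define $Z(t,h)$ genuinely land inside $\S^\D_+$. This is exactly what Lemma~\ref{l.X(t,.)properties}~\eqref{i.l.X(t,.)properties_2} provides, relying on the monotonicity $\nabla\H(\S^\D_+)\subset\S^\D_+$ and $\nabla\psi(\S^\D_+)\subset\S^\D_+$ already recorded above the lemma; these two facts are what makes the nonlinearity ``point in the right direction'' so that Brouwer's fixed-point argument used in Lemma~\ref{l.X(t,.)properties} succeeds and $\tilde f$ is unambiguously defined on all of $[0,L^{-1})\times\S^\D_+$.
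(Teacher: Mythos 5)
Your proposal is correct and follows essentially the same route as the paper: your candidate $\tilde f(t,h)=\psi(h_0)+t[\H(p_0)-p_0\cdot\nabla\H(p_0)]$ with $h_0=Z(t,h)$ is exactly the paper's $u(t,h)=U(t,Z(t,h))$, the verification that $\nabla_h\tilde f=\nabla\psi\circ Z$ and $\partial_t\tilde f=\H(\nabla_h\tilde f)$ mirrors the paper's chain-rule computation, and the identification with $f$ via uniqueness plus the reliance on Lemma~\ref{l.X(t,.)properties} to keep backward characteristics in $\S^\D_+$ is the same argument. The only small thing the paper does that you gloss over is to note explicitly that $\nabla\tilde f=\nabla\psi^Z\in\S^\D_+$, so that $\tilde f(t,\cdot)$ is increasing and falls in the class covered by the uniqueness result \cite[Corollary~3.2]{chen2022hamiltonCones} used on the truncated time interval.
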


The proof is classical but we choose to include it here for completeness.

\begin{proof}
It is sufficient to construct a twice-differentiable Lipschitz increasing solution $u$ of~\eqref{e.hj_stats} on $[0,L^{-1})\times \S^\D_+$ with $u(0,\cdot)=\psi$. Indeed, we can conclude that $f$ coincide with $u$ on the prescribed domain from the uniqueness of solution on $[0,L^{-1})\times \S^\D_+$ (see \cite[Corollary~3.2]{chen2022hamiltonCones} which can be easily adapted to the domain $[0,L^{-1})\times \S^\D_+$ from $\R_+\times \S^\D_+$).

With this explained, we turn to the construction of $u$. 
We define
\begin{align*}
    U(t,h)=\psi(h)-t \nabla\H(\nabla\psi(h))\cdot \nabla \psi(h)+ t\H(\nabla\psi(h)),\quad\forall (t,h) \in \R_+\times \S^\D_+.
\end{align*}
Let $Z$ be given by Lemma~\ref{l.X(t,.)properties} and we can restrict it to $[0,L^{-1})\times \S^\D_+$. Then, we set
\begin{align*}
    u(t,h) = U(t,Z(t,h)),\quad\forall (t,h) \in [0, L^{-1})\times \S^\D_+.
\end{align*}
We first verify the initial condition. Since $U(0,\cdot) =\psi$ and $Z(0,\cdot)$ is the identity map, we get $u(0,\cdot)=\psi$ as desired.

From the definition of $u$, it is clear that $u$ is differentiable everywhere. Then, we verify that $u$ satisfies the equation~\eqref{e.hj_stats}, the procedure of which will also show that $u$ is twice-differentiable.
To carry out computations, we introduce some notation. Let $d = \D(\D-1)/2$ and we fix any orthogonal basis $\{e_1,\dots, e_d\}$ for $\S^\D$. For any suitable function $g$, we write $g_t$ for the derivative in $t$ and $g_i$ for the directional derivative in $h$ along $e_i$, for $i\in\{1,\dots ,d\}$.  

For convenience, we identify the linear inner product space $\S^\D$ with $\R^d$ and think of $\S^\D_+$ as a subset of $\R^d$. This allows us to think of the gradient of a scalar function $g$ (such as $\psi$, $\H$, $U$, and $u$) as a column vector $\nabla g= (g_i)_{1\leq i\leq d}$ in $\R^d$; its Hessian $\nabla^2 g=(g_{ij})_{1\leq i,j\leq d}$ as a symmetric $d\times d$ matrix; an $\R^d$-valued function $G = (G^i)_{1\leq i\leq d}$ (such as $X$ and $Y$) as a column vector; and its gradient $\nabla G = (G^i_j)_{1\leq i,j\leq d}$ as a $d\times d$ matrix. In the following, we view $\cdot$ as the inner product in $\R^d$ and always evaluate matrix multiplication (including matrix multiplied by a vector) before evaluating the inner product.
For $(t,h)$ clear from the context, we also write $\nabla g^Z$ or $g_t^Z$ to indicate that we evaluate $\nabla g$ or $g_t$ at the spatial variable $Z(t,h)$.

In the following computations, we keep $(t,h)\in [0,L^{-1})\times \S^\D_+$ implicit.
We first compute the spatial derivative of $u$. We start with some basic computations.
\begin{align}
    \nabla U &= (U_i)_{1\leq i\leq d} = \nabla \psi - t\Ll(\nabla^2\H(\nabla\psi)\nabla^2\psi\Rr)^\intercal \nabla \psi,\notag
    \\
    \nabla X & = (X^i_j)_{1\leq i,j\leq d} = \mathbf{I} - t\nabla^2H(\nabla\psi)\nabla^2 \psi,\notag
    \\
    \nabla u & = (\nabla Z)^\intercal \nabla U^Z. \label{e.nablau=nablaZnablaU}
\end{align}
where $\mathbf{I}$ is the $d\times d$ identity matrix. The first two relations give
\begin{align*}
    \nabla U = (\nabla X)^\intercal \nabla \psi.
\end{align*}
This along with~\eqref{e.nablau=nablaZnablaU} and the fact that $Z$ is the inverse of $X$ implies 
\begin{equation} \label{e.nabla_u=}
    \nabla u = \nabla \psi^Z.
\end{equation}
Next, we compute the derivative of $u$ in $t$. We start with
\begin{align}
    U_t &= - \nabla H(\nabla \psi) \cdot \nabla \psi + H(\nabla \psi)  \notag
    \\
    X_t & = -\nabla H(\nabla\psi),\label{e.X_t=}
    \\
    u_t &= U^Z_t + \nabla U^Z \cdot Z_t \notag
\end{align}
Therefore, 
\begin{equation}
    u_t = -\nabla \H\Ll(\nabla\psi^Z\Rr)\cdot \nabla \psi^Z + \H\Ll(\nabla \psi^Z\Rr) + \Ll(\nabla X^Z\Rr)^\intercal \nabla\psi^Z\cdot Z_t. \label{e.u_t=}
\end{equation}
Notice that the last term is of the form $A^\intercal v\cdot v'$ for a matrix $A$ and vectors $v,v'$. We can rearrange it into $Av'\cdot v$.
Since $Z$ is the inverse of $X$, we get
\begin{align*}
    X^Z_t + \nabla X^Z Z_t =0.
\end{align*}
Using this, \eqref{e.X_t=} and the aforementioned rearrangement to cancel the first and third terms in~\eqref{e.u_t=}, we get
\begin{align*}
    u_t = \H\Ll(\nabla\psi^Z\Rr).
\end{align*}
This along with~\eqref{e.nabla_u=} implies that $u$ satisfies the equation~\eqref{e.hj_stats} in the classical sense. Moreover, we can also infer from them that $u$ is twice differentiable everywhere. Finally, from \eqref{e.nabla_u=} and the monotony of $\psi$, we get that $\nabla u \in S^D_+$, so $u(t,\cdot)$ is an increasing function. 
\end{proof}

\smallskip

\noindent \textbf{Funding.} HBC is funded by the Simons Foundation. 

\noindent
\textbf{Data availability.}
No datasets were generated during this work.

\noindent
\textbf{Conflict of interests.}
The authors have no conflicts of interest to declare.

\noindent
\textbf{Competing interests.}
The authors have no competing interests to declare.

%

\small
\bibliographystyle{plain}
\bibliography{ref}

\end{document}